\definecolor{lightblue}{rgb}{0.0,0.4,0.7}
\newcommand{\C}{\mathbb{C}}
\newcommand{\Z}{\mathbb{Z}}
\newcommand{\Q}{\mathbb{Q}}
\newcommand{\R}{\mathbb{R}}
\newcommand\Disc{\operatorname{Disc}}
\newcommand\bas{\operatorname{bas}}
\newcommand\Res{\operatorname{Res}}
\newcommand\GL{\operatorname{GL}}
\newcommand\ord{\operatorname{ord}}
\newcommand\abs{\operatorname{abs}}
\renewcommand{\Im}{\operatorname{Im}}
\renewcommand{\Re}{\operatorname{Re}}
\newtheorem{proposition}{Proposition}%[section]
\newtheorem{theorem}[proposition]{Theorem}
\newtheorem{lemma}[proposition]{Lemma}
\newtheorem{remark}[proposition]{Remark}
\title{Uniform bounds for lattice point counting and partial sums of zeta functions}
\author{David Lowry-Duda, Takashi Taniguchi, and Frank Thorne}
\begin{document}

\maketitle

%\tableofcontents

\begin{abstract}
We prove uniform versions of two classical results in analytic number theory.

The first is an asymptotic for the number of points of a complete lattice
$\Lambda \subseteq \R^d$ inside the $d$-sphere of radius $R$.
In contrast to previous works, we obtain error terms with implied constants
depending only on $d$.

Secondly, let $\phi(s) = \sum_n a(n) n^{-s}$ be a `well behaved' zeta function.
A classical method of Landau yields asymptotics for the
partial sums $\sum_{n < X} a(n)$, with power saving error terms.
Following an exposition due to Chandrasekharan and Narasimhan, we obtain a version
where the implied constants in the error term will depend only on the `shape of the
functional equation', implying uniform results for families of zeta functions
with the same functional equation.

\end{abstract}

%\tableofcontents

%**********************************************
%**********************************************
%**********************************************
\section{Introduction}\label{intro}
%**********************************************
%**********************************************
%**********************************************

%Our first result is simple to state, and we were surprised to learn that it is apparently not already in the literature.

 Let $\Lambda \subseteq \R^d$ be an arbitrary complete lattice
(i.e., free $\Z$-module of rank $d$),
and consider the counting function
\begin{equation*}
 N(\Lambda, R) := \# \{ v \in \Lambda \ : \ |v| < R\}.
\end{equation*}

We define $r_{\bas}(\Lambda)$ to be the infimum of all $r \in \mathbb{R}^+$ such that
the open ball $B(r)$ of radius $r$ and center $0$ contains a
$\Z$-basis for $\Lambda$.

\begin{theorem}\label{thm:main}
If $R > r_{\bas}(\Lambda)$, then we have
\begin{align}\label{eqn:main}
  N(\Lambda, R)
  =
  \frac{\pi^{d/2}}{\Gamma(d/2 + 1)} \frac{R^d}{\lvert \det(\Lambda) \rvert}
  +
  O_{d}\bigg(
      \frac{1}{\lvert \det \Lambda \rvert}
      r_{\bas}(\Lambda)^{\frac{2d}{d+1}}
    \; R^{d \cdot \frac{d-1}{d+1}}
  \bigg).
\end{align}
\end{theorem}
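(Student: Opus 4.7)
My plan is to approximate the indicator $1_{B(R)}$ by a smooth function at scale $\delta$, apply Poisson summation, and optimize $\delta$ at the end, tracking every estimate's dependence on $\Lambda$ through only $|\det\Lambda|$ and $r_{\bas}(\Lambda)$.

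First I would fix a basis $v_1, \dots, v_d$ of $\Lambda$ with $\max_i |v_i| < r_{\bas}(\Lambda) + \varepsilon$, assembled into a matrix $A$ so that $\Lambda = A\Z^d$. Choose a fixed nonnegative radial bump function $\phi$ with $\int\phi = 1$ supported in the unit ball, and set $\phi_\delta(x) := \delta^{-d}\phi(x/\delta)$. Form $f^{\pm}_\delta := 1_{B(R \pm \delta)} * \phi_\delta$, which sandwich the indicator $1_{B(R)}$ pointwise:
\begin{equation*}
  \sum_{v \in \Lambda} f^-_\delta(v) \;\leq\; N(\Lambda, R) \;\leq\; \sum_{v \in \Lambda} f^+_\delta(v).
\end{equation*}
Poisson summation yields $\sum_{v \in \Lambda} f^{\pm}_\delta(v) = |\det\Lambda|^{-1} \sum_{\xi \in \Lambda^*} \widehat{f^{\pm}_\delta}(\xi)$; the $\xi = 0$ term is $\Vol(B(R \pm \delta))/|\det\Lambda|$, matching the main term of \eqref{eqn:main} up to a ``shell error'' of size $O_d(R^{d-1}\delta/|\det\Lambda|)$.

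For the nonzero frequencies, I would combine the classical Bessel-type decay $|\widehat{1_{B(R)}}(\xi)| \ll_d R^{(d-1)/2} |\xi|^{-(d+1)/2}$ with the rapid decay of $\widehat{\phi_\delta}$, which effectively truncates the dual sum at $|\xi| \lesssim 1/\delta$. A dyadic decomposition combined with a uniform covering estimate for $N(\Lambda^*, T)$ bounds this dual contribution, and balancing this bound against the shell error leads to a choice $\delta \asymp r_{\bas}(\Lambda)^{2/(d+1)} R^{-(d-1)/(d+1)}$ which, on substitution, produces the desired error of size $r_{\bas}(\Lambda)^{2d/(d+1)} R^{d(d-1)/(d+1)}/|\det\Lambda|$.

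The main obstacle is obtaining the uniform covering estimate for $\Lambda^*$ in terms of $r_{\bas}(\Lambda)$ alone, since a short basis of $\Lambda$ need not yield a short basis of $\Lambda^*$. I plan to circumvent this by performing the Fourier analysis on the $\Z^d$-side after the substitution $v = Au$: the ball $B(R)$ becomes an ellipsoid $E = A^{-1} B(R)$, and Poisson for $\Z^d$ produces a dual sum whose summands involve $\widehat{1_E}(\xi) = |\det A|^{-1}\widehat{1_{B(R)}}(A^{-T}\xi)$, with $|A^{-T}\xi|$ controlled directly by the singular values of $A$ (quantities bounded by $r_{\bas}(\Lambda)$ and $|\det\Lambda|$). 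The hypothesis $R > r_{\bas}(\Lambda)$ keeps the optimal $\delta$ strictly below $R$, so that the smoothed functions $f^{\pm}_\delta$ genuinely approximate $1_{B(R)}$ and the dual-side estimates apply in the regime where the Bessel decay dominates the covering count.
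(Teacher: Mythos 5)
Your outline is correct, but it takes a genuinely different route from the paper. The paper deduces Theorem~\ref{thm:main} from its uniform version of Landau's method (Theorem~\ref{thm:cn-full-improved}) applied to the Epstein zeta function $\zeta(s,\Lambda)$, using the functional equation relating $\Lambda$ to $\Lambda^*$ together with finite differencing of Riesz means and $J$-Bessel bounds; you instead smooth $1_{B(R)}$ at scale $\delta$, apply Poisson summation directly, and use the stationary-phase decay $\widehat{1_{B(R)}}(\xi)\ll_d R^{(d-1)/2}|\xi|^{-(d+1)/2}$. Both methods are, at bottom, exploiting the same Bessel-function decay, and both require the identical geometric input: a count of dual lattice points $N(\Lambda^*,T)\ll_d T^d r_{\bas}(\Lambda)^d$, which the paper obtains from a packing argument (Lemma~\ref{lem:point_count}) plus the duality $\lambda_1(\Lambda^*)\, r_{\bas}(\Lambda)\ge 1$ (Lemma~\ref{lem:dual_lattice}). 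Your proposed ``circumvention'' via the substitution $v=Au$ and the singular values of $A$ is this same duality fact in disguise ($\lambda_1(\Lambda^*)=\min_{\xi\ne 0}|A^{-T}\xi|\ge \sigma_{\max}(A)^{-1}\gg_d r_{\bas}(\Lambda)^{-1}$), and it does work. What your approach buys is a shorter, self-contained, real-variable proof of this one theorem; what the paper's approach buys is that the same machinery simultaneously yields Theorems~\ref{thm:cn} and~\ref{thm:ideals} and the applications to Shintani zeta functions, where no Poisson-summation interpretation of the coefficients is available. One correction: your stated optimization $\delta\asymp r_{\bas}(\Lambda)^{2/(d+1)}R^{-(d-1)/(d+1)}$ has the wrong exponent on $r_{\bas}$. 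Balancing the shell error $R^{d-1}\delta/|\det\Lambda|$ against the dual contribution $R^{(d-1)/2}r_{\bas}(\Lambda)^d\delta^{-(d-1)/2}/|\det\Lambda|$ gives $\delta\asymp r_{\bas}(\Lambda)^{2d/(d+1)}R^{-(d-1)/(d+1)}$; with your exponent the dual term dominates and exceeds the claimed bound for $d\ge 2$. With the corrected $\delta$ the condition $\delta<R$ is exactly $r_{\bas}(\Lambda)<R$, matching the hypothesis, and the error comes out to $r_{\bas}(\Lambda)^{2d/(d+1)}R^{d(d-1)/(d+1)}/|\det\Lambda|$ as required.
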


Note that $r_{\bas}(\Lambda)$ is $O_d(1)$ times the largest successive minimum
of $\Lambda$ (see~\cite[Lemma 8, p. 135]{Cassels}), so that this bound could be phrased in terms
of successive minima instead.

\medskip

Many results like Theorem \ref{thm:main} exist in the literature, and we refer to the comprehensive survey article of
Ivi\'c, Kr\"atzel, K\"uhleitner, and Nowak \cite{lattice_survey} for an overview and numerous references.

We first note that such results may be proved
using the geometry of numbers. One obtains an error term of $O_{d, \Lambda}(R^{d - 1})$:
see Davenport \cite{davenport_lemma} for the basic
principle
%, \cite[Lemma 1]{schmidt} and \cite[Lemma 2]{MV} for results where the
%$\Lambda$-dependence is explicit, 
and Widmer \cite[Theorem 5.4]{widmer} or Ange \cite[Proposition 1.5]{Ange} for versions with a completely explicit error term.

We are interested in the better error terms that come from more analytic techniques. In this context, we could not
find any general result where the dependence of the error term on $\Lambda$ is specified. Such a result
(with a different shape, and a slightly better $R$-dependence of $R^{d - 2}$),
was proved by Bentkus and G\"otze~\cite{bentkus}, but
with the dimension $d$ assumed to be at least $9$.

Our proof is based on classical work of Landau.
It turns out that the Dirichlet series
\begin{equation*}
\zeta(s, \Lambda) := \sum_{v \in \Lambda - \{ 0 \}} |v|^{-2s}
\end{equation*}
are {\itshape Epstein zeta functions}, enjoying analytic continuation and a functional equation of a uniform shape.
Writing $\zeta(s, \Lambda) =: \sum_{n} a(n) \lambda_n^{-s}$, our question is therefore reduced to obtaining error terms
in estimates for the partial sums $\sum_{\lambda_n < X} a(n)$.

This approach was followed in classical work of Landau \cite{Landau1912,
Landau1915}, who obtained \eqref{eqn:main} with the implied constant depending
on $\Lambda$ in an unspecified manner.
Landau, and following him Chandrasekharan and Narasimhan~\cite{ChandrasekharanNarasimhan62}, proceeded by developing general techniques
to bound the partial sums of Dirichlet series with analytic continuation and a functional equation.
Our second main theorem (of which the first will be a consequence) is a uniform
version of this result, valid for a wide class of zeta functions.

We postpone a precise statement to Section~\ref{sec:cn};
the following is a special case.

\begin{theorem}\label{thm:cn}
Let $\phi(s) = \sum_{n} a(n) \lambda_n^{-s}$ be a zeta function with nonnegative coefficients,
absolutely convergent for $\Re(s) > 1$, enjoying an analytic
continuation to $\C$ which is holomorphic away from a simple pole at $s = 1$,
and with a `well behaved' functional equation of degree $d$
relating $\phi(s)$ to $\widehat{\phi}(1 - s)$ for a `dual zeta function' $\widehat{\phi}(s) = \sum_n b(n) \mu_n^{-s}$.

Then, we have
\begin{equation}\label{eq:cn}
\sum_{\lambda_n < X} a(n) = \Res_{s = 1} \big( \phi(s)\big) X + O(X^{\frac{d - 1}{d + 1}}
\delta_1^{\frac{d - 1}{d + 1}} \widehat{\delta_1}^{\frac{2}{d + 1}}),
\end{equation}
provided that the error term is bounded by the main term, and where
\begin{align*}
  \delta_1 &= \Res_{s = 1} \big( \phi(s) \big), \\
 \widehat{\delta_1} &= \sup_Z \frac{1}{Z} \sum_{\mu_n < Z} \lvert b(n) \rvert.
\end{align*}
The implied constant depends on the functional equation, but does not depend further on $\phi(s)$ or the $a(n)$.
\end{theorem}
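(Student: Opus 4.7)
The plan is to follow the classical Chandrasekharan--Narasimhan method based on Riesz means and Mellin--Barnes integrals, carefully tracking the dependence of all constants on $\delta_1$ and $\widehat{\delta_1}$. For a parameter $\rho > 0$ to be chosen, introduce the Riesz mean
\[
A_\rho(X) := \frac{1}{\Gamma(\rho+1)} \sum_{\lambda_n < X} a(n) (X - \lambda_n)^\rho,
\]
which has the Mellin--Barnes representation
\[
A_\rho(X) = \frac{1}{2\pi i} \int_{\Re(s) = c} \phi(s) \frac{\Gamma(s)}{\Gamma(s + \rho + 1)} X^{s + \rho} \, ds
\]
for $c > 1$. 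Shifting the contour to $\Re(s) = -\eta$ for small $\eta > 0$ picks up the residue $\delta_1 X^{1+\rho}/\Gamma(\rho+2)$ at the simple pole $s = 1$, together with a lower-order contribution from the pole of $\Gamma(s)$ at $s = 0$.

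On the shifted contour I would apply the functional equation and expand $\widehat{\phi}(1-s) = \sum_n b(n) \mu_n^{s-1}$ as a Dirichlet series (valid since $\Re(1-s) > 1$), then interchange summation with integration. The result is a Voronoi-type expansion
\[
A_\rho(X) = \frac{\delta_1 X^{1+\rho}}{\Gamma(\rho+2)} + (\text{term from }s=0) + \sum_n b(n)\, F_\rho(\mu_n X),
\]
where $F_\rho(z)$ is a Mellin--Barnes integral involving only the ratio of gamma factors from the functional equation. Applying Stirling's formula on vertical strips should yield $|F_\rho(z)| \ll z^{\alpha(\rho,d)}$ for $z$ bounded and appropriate decay for $z$ large. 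Splitting the Voronoi sum at $\mu_n = Y$, using $\sum_{\mu_n \leq Z} |b(n)| \leq \widehat{\delta_1} Z$, and optimizing in $Y$ should bound this tail by $\widehat{\delta_1} \cdot X^{\gamma(\rho, d)}$ for an explicit exponent $\gamma$.

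To recover $A(X) = A_0(X)$ from $A_\rho(X)$, I would apply the $\rho$-th forward finite-difference operator of step $y$ to the identity above. Differencing the main term produces $\delta_1 X$ plus a smoothing error of order $y\delta_1$, while differencing the Voronoi expansion divides its bound by $y^\rho$. Balancing $y\delta_1 \asymp y^{-\rho} \widehat{\delta_1}\, X^{\gamma(\rho,d)}$ and choosing $\rho$ so that the exponents in the final estimate match $(d-1)/(d+1)$ for $X$ and $\delta_1$, and $2/(d+1)$ for $\widehat{\delta_1}$, should give the claim. The main obstacle will be the uniform estimation of the kernel $F_\rho$: every Stirling-type bound on the gamma factors must depend only on the shape of the functional equation (the gamma factors and the degree $d$) and not on $\phi$ itself. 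Relatedly, I need to verify that the residue at $s=0$ and any smoothing errors carry the correct dependence on $\delta_1$, so that the separation between the ``shape'' part of the constant and the $\delta_1, \widehat{\delta_1}$ part is clean throughout.
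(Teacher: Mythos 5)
Your architecture is exactly the paper's (Landau/Chandrasekharan--Narasimhan: Riesz means, contour shift, functional equation, Voronoi-type expansion, finite differencing, optimization), but the central mechanism of the differencing step is garbled in a way that would not produce the exponent $\tfrac{d-1}{d+1}$. You propose to bound the Voronoi sum in $A_\rho(X)$ absolutely first and then assert that ``differencing the Voronoi expansion divides its bound by $y^\rho$.'' That is not how the gain arises. The point of the method is that the differenced kernel admits \emph{two} competing bounds: trivially $\Delta_y^\rho I(\mu_n X) \ll \max_{t\asymp \mu_n X}|I(t)|$, and, via the iterated-integral formula for $\Delta_y^\rho$, also $\Delta_y^\rho I(\mu_n X)\ll (\mu_n y)^\rho \max_{t\asymp\mu_n X}|I^{(\rho)}(t)|$. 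One must split the sum over $\mu_n$ at the crossover point $z = X^{2A-1}/y^{2A}$ \emph{after} differencing, using the derivative bound for $\mu_n\le z$ and the trivial bound for $\mu_n>z$; only then does the hypothesis $\sum_{\mu_n\le Z}|b(n)|\le\widehat{\delta_1}Z$ produce the term $\widehat{\delta_1}\,y^\rho X^{\frac12-\frac{1}{2d}}z^{\frac12-\frac{1}{2d}}$ that, balanced against the smoothing error $y^{\rho+1}\delta_1$, gives $X^{\frac{d-1}{d+1}}\delta_1^{\frac{d-1}{d+1}}\widehat{\delta_1}^{\frac{2}{d+1}}$. Your version --- a single absolute bound on $\sum_n b(n)F_\rho(\mu_n X)$ followed by division by $y^\rho$ --- yields, after optimizing $y$ and letting $\rho\to\infty$, only the exponent $1-\tfrac1d$, which is strictly worse; moreover the absolute bound requires $\sum_n|b(n)|\mu_n^{-\sigma}$ with $\sigma>1$, which is not controlled by $\widehat{\delta_1}$ alone (it is sensitive to the smallest $\mu_n$), so the claimed uniformity would also fail.

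Two smaller points. First, differencing $A_\rho$ itself produces $y^\rho A_0(X)$ plus a boundary term $y^\rho\sum_{X<\lambda_n\le X+\rho y}a(n)$; since the $a(n)$ are nonnegative here, this is absorbed by sandwiching $y^\rho A_0(X)\le\Delta_y^\rho A_\rho(X)\le y^\rho A_0(X+\rho y)$, a step you should make explicit. Second, the contribution from $s=0$ is $\phi(0)$, which via the functional equation is bounded by $O(\widehat{\delta_1})$ rather than by anything involving $\delta_1$; you flag this but it does need to be carried out to land in the stated error term.
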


Here we think of $\delta_1$ as a `density at $s = 1$', and of $\widehat{\delta_1}$ as the `density of the dual',
even if for technical reasons we cannot formulate the latter in terms of a residue, even if the $b(n)$ are nonnegative. We assume above
(as part of being `well behaved') that $\widehat{\delta_1}$ is finite.

We can now describe how to recognize Theorem~\ref{thm:main} as a consequence of
Theorem~\ref{thm:cn}. In terms of the Epstein zeta function $\zeta(s, \Lambda)$,
we recognize that $N(\Lambda, R) = \sum_{\lambda_n \leq R^2} a(n)$.
Applying Theorem~\ref{thm:cn} to $\phi(s) = \zeta(\frac{d}{2} s, \Lambda)$ gives
$N(\Lambda, R^{1/d})$ in terms of
$\delta_1 = \pi^{d/2} \lvert \det \Lambda \rvert^{-1} \Gamma(\frac{d}{2} + 1)^{-1}$
and
$\widehat{\delta_1} = O_d( \lvert \det \Lambda \rvert^{-1} r_{\bas}(\Lambda)^d)$.
Renormalizing to get $N(\Lambda, R)$ gives the statement of Theorem~\ref{thm:main}.
We carry out this investigation in more detail in Section~\ref{sec:proof_main}.

We refer to Section~\ref{sec:cn} for the precise conditions required of the
functional equation in Theorem~\ref{thm:cn}; the definition of
`well behaved' includes (for example) all of the $L$-functions described in \cite[Chapter 5.1]{IK}. Following
\cite{ChandrasekharanNarasimhan62} we stipulate a functional equation
\eqref{eq:functional_base} without any factors of $\pi^{-s/2}$ or involving the `conductor'. These factors should
instead be incorporated into the definition of $\widehat{\phi}(s)$, so that $\mu_n$ will not in general be supported on the integers.
This choice of normalization should be kept in mind when bounding $\widehat{\delta_1}$.
(See Section \ref{sec:proof_ideals} for a typical example.)

Results of a similar flavor were proved by Friedlander and Iwaniec \cite{FI}, by an alternative
classical method. (`Truncating the contour' instead of `finite differencing'.)
In addition, they explain how their results may be further improved when one
can obtain cancellation in certain exponential sums.
(It should be possible, at least in principle, to improve the results of this paper by incorporating
asymptotic estimates for $J$-Bessel functions in place of upper bounds.)

Their method assumes
more of the zeta function; in particular, they assume that its coefficients $a(n)$ are supported on the positive
integers and satisfy the bound $a(n) \ll n^{\epsilon}$.
We are especially interested in
examples, such as Epstein zeta functions, where these hypotheses fail. Some preliminary work suggests that
their method can possibly be made to work without such hypotheses, but that the proofs would not be immediate.

The proof of Theorem~\ref{thm:cn} consists largely of a careful reading
of the analogous proof in~\cite{ChandrasekharanNarasimhan62}. Nevertheless, for the convenience of the reader we present a complete
proof (closely following~\cite[Theorem~4.1]{ChandrasekharanNarasimhan62}). (Our result also eliminates a factor of $X^{\epsilon}$ from
\cite[Theorem~4.1]{ChandrasekharanNarasimhan62}; it was mentioned as
\cite[Remark~5.5]{ChandrasekharanNarasimhan62}, and also seen in
Landau's earlier work, that this was possible.)

%We can apply Theorem \ref{thm:cn} (or its generalization) `out of the box', provided that we can effectively
%bound $\widehat{\delta_1}$ --- often an interesting problem itself.
%Indeed, in the application to Theorem~\ref{thm:main},
%this is where the largest successive minimum $\lambda_n$ makes its appearance.
%this is where the size of the ball containing a basis for $\Lambda$ makes its
%appearance.

\medskip

Another application of `uniform Landau' is the following estimate for the number of ideals
of bounded norm in a number field:
\begin{theorem}\label{thm:ideals}
Let $K$ be a number field of degree $d \geq 1$. Then, the number of integral ideals $\mathfrak{a}$ with
$N(\mathfrak{a}) < X$ satisfies the estimate
\begin{equation}\label{eq:ideals}
\# \{ \mathfrak{a} \ : \ N(\mathfrak{a}) < X \} =
\frac{ 2^{r_1} (2 \pi)^{r_2} h R}{ w |\Disc(K)|^{1/2} } X
+
O\Big( |\Disc(K)|^{ \frac{1}{d + 1}} X^{\frac{d - 1}{d + 1}} (\log X)^{d - 1} \Big),
\end{equation}
if the error term is bounded by the main term, and where the implied constant depends on $d$ only.
\end{theorem}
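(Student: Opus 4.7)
The plan is to apply Theorem~\ref{thm:cn} (in its general form from Section~\ref{sec:cn}) to $\phi(s) = \zeta_K(s) = \sum_n a_K(n) n^{-s}$, where $a_K(n)$ counts integral ideals of $\mathcal{O}_K$ of norm $n$; the target sum is then precisely $\sum_{\lambda_n < X} a_K(n)$ with $\lambda_n = n$. The Dedekind zeta function has nonnegative coefficients, converges absolutely for $\Re(s)>1$, and has a simple pole at $s=1$ as its only singularity, so the meromorphic hypotheses of Theorem~\ref{thm:cn} are immediate. The classical functional equation
\[
  |\Disc(K)|^{s/2}\,\pi^{-r_1 s/2}(2\pi)^{-r_2 s}\,\Gamma(s/2)^{r_1}\Gamma(s)^{r_2}\,\zeta_K(s)
\]
is invariant under $s \mapsto 1-s$, and rearranging it into the Chandrasekharan--Narasimhan form supplies $\Delta(s) = \Gamma(s/2)^{r_1}\Gamma(s)^{r_2}$, of degree $r_1 + 2r_2 = d$, together with a dual series $\widehat\phi(u) = c^{2u-1}\zeta_K(u)$, i.e.\ $b(n) = a_K(n)/c$ with rescaled exponents $\mu_n = n/c^2$, where $c := |\Disc(K)|^{1/2}\pi^{-r_1/2}(2\pi)^{-r_2}$.

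The residue is read off from the analytic class number formula,
\[
  \delta_1 \;=\; \Res_{s=1}\zeta_K(s) \;=\; \frac{2^{r_1}(2\pi)^{r_2} h R}{w\,|\Disc(K)|^{1/2}},
\]
matching the main term of~\eqref{eq:ideals}. Substituting $Y = Zc^2$ yields $\widehat{\delta_1} = c\,\sup_Y Y^{-1}\sum_{n<Y} a_K(n)$, so all that remains is a uniform upper bound on the ideal counting function. An elementary Rankin-style estimate $\sum_{n<Y}a_K(n) \le Y^\sigma\zeta_K(\sigma) \le Y^\sigma\zeta(\sigma)^d$, optimized at $\sigma = 1+1/\log Y$ and combined with the uniform estimate $(\sigma-1)\zeta_K(\sigma) \ll_d \delta_1$ for $\sigma$ just right of $1$, gives $\sum_{n<Y}a_K(n) \ll_d \delta_1 Y(\log Y)^{d-1}$, so that $\widehat{\delta_1} \ll_d c\,\delta_1(\log X)^{d-1} \asymp hR\,(\log X)^{d-1}$ over the range of $Z$ that arises in the contour shift.

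Substituting into the error term $X^{(d-1)/(d+1)}\delta_1^{(d-1)/(d+1)}\widehat{\delta_1}^{2/(d+1)}$, the $\delta_1$ and $\widehat{\delta_1}$ contributions combine (modulo logs) to $(hR)\cdot|\Disc(K)|^{-(d-1)/(2(d+1))}$. The last step invokes the standard upper bound $hR \ll_d |\Disc(K)|^{1/2}(\log|\Disc(K)|)^{d-1}$, which itself follows by applying Rankin's trick to $\zeta_K(\sigma)$ at $\sigma = 1 + 1/\log|\Disc(K)|$ and has constant depending only on $d$. Trading $hR$ for $|\Disc(K)|^{1/2}(\log|\Disc(K)|)^{d-1}$ produces the exponent $\tfrac12 - \tfrac{d-1}{2(d+1)} = \tfrac{1}{d+1}$ on $|\Disc(K)|$, matching~\eqref{eq:ideals}. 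In the regime where the error is bounded by the main term, one has $\log|\Disc(K)| \ll_d \log X$, so all logarithmic factors collapse into $(\log X)^{d-1}$.

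The principal obstacle is the careful accounting of logarithmic factors: we cannot invoke the very asymptotic we are proving, so the preliminary bound on $\sum_{n<Y} a_K(n)$ must be sharp enough that after raising to the $2/(d+1)$-th power and pairing with the Brauer--Siegel-type loss $(\log|\Disc(K)|)^{d-1}$, the total exponent of $\log X$ lands at $d-1$ rather than something larger. A subsidiary technical point is to confirm that the polylogarithmic growth of $\sum_{\mu_n < Z}|b(n)|$ fits into the ``well-behaved'' framework of Section~\ref{sec:cn}; presumably the general theorem there allows $\widehat{\delta_1}$ to be interpreted as a $Z$-dependent density evaluated at the specific $Z$ arising in the contour shift, rather than as a true supremum.
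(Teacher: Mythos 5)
Your overall strategy is the same as the paper's: apply the uniform Landau theorem to $\zeta_K$, read off the main term from the class number formula, bound the dual partial sums by a divisor-type estimate, and finish with an upper bound on $hR$ (equivalently, on the residue). However, there is a genuine gap in the key estimate for $\widehat{\delta_1}$. The ``uniform estimate'' $(\sigma-1)\zeta_K(\sigma)\ll_d \delta_1$ for $\sigma$ just right of $1$ is false: since $\zeta_K(\sigma)\ge N(\mathcal{O}_K)^{-\sigma}=1$ for $\sigma>1$, one has $(\sigma-1)\zeta_K(\sigma)\ge \sigma-1=1/\log Y$, whereas $\delta_1=2^{r_1}(2\pi)^{r_2}hR/(w\sqrt{q})$ can be made smaller than any fixed constant as $q\to\infty$ (and the bound would in any case be needed for small $Y$, e.g.\ $Y=2$, where it forces $1\ll_d\delta_1$). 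Consequently the claimed bound $\sum_{n<Y}a_K(n)\ll_d \delta_1 Y(\log Y)^{d-1}$, hence $\widehat{\delta_1}\ll_d hR(\log X)^{d-1}$, is unjustified. Moreover, even granting it, your own bookkeeping does not close: $hR$ enters with total exponent $\tfrac{d-1}{d+1}+\tfrac{2}{d+1}=1$, so replacing $hR$ by $\sqrt{q}(\log q)^{d-1}$ contributes $(\log X)^{d-1}$, on top of the $(\log X)^{2(d-1)/(d+1)}$ coming from $\widehat{\delta_1}^{2/(d+1)}$; the total log exponent is $(d-1)\bigl(1+\tfrac{2}{d+1}\bigr)>d-1$. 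You correctly identify this as the principal obstacle, but the proposed resolution does not overcome it.

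The fix is to decouple $\widehat{\delta_1}$ from $\delta_1$ entirely, which is what the paper does: from $a_K(n)\le d_d(n)$ and the elementary estimate $\sum_{n<Z}d_d(n)\ll_d Z(\log Z)^{d-1}$ (not Rankin's trick, which loses a logarithm) one gets $\widehat{\delta_1}\ll_d q^{1/2}(\log qX)^{d-1}$, while Louboutin's theorem gives $\delta_1=\Res_{s=1}\zeta_K(s)\ll_d(\log q)^{d-1}$. Then
\[
\delta_1^{\frac{d-1}{d+1}}\,\widehat{\delta_1}^{\frac{2}{d+1}}
\ll_d q^{\frac{1}{d+1}}(\log qX)^{(d-1)\frac{d-1}{d+1}+(d-1)\frac{2}{d+1}}
= q^{\frac{1}{d+1}}(\log qX)^{d-1},
\]
and $\log q\ll\log X$ in the regime where the error is dominated by the main term. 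Your final technical worry is resolved by the paper's Theorem~\ref{thm:cn-full-improved} rather than Theorem~\ref{thm:cn}: the hypothesis $B_\psi(Z)=C_\psi Z^r\log^{r'}(C_\psi'Z)$ explicitly accommodates polylogarithmic growth, and one must indeed invoke the full theorem (with an explicit choice of $\eta$) because the resulting ``$\widehat{\delta_1}$'' depends on $X$.
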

We prove this theorem for $d \geq 2$ as an application
of Theorem~\ref{thm:cn-full-improved}, our most general version of our main
theorem,
and we remark that for $d=1$ the statement is trivial.
This is very nearly a direct application of Theorem~\ref{thm:cn}, except that
we estimate
$\sum_{\mu_n < Z} |b(n)| \ll Z (\log Z)^{d - 1}$, which amounts to formally taking
$\widehat{\delta_1} = O\big((\log Z)^{d - 1}\big).$
The factor of $(\log X)^{d - 1}$ in \eqref{eq:ideals}
subsumes both this and a related logarithmic factor in $\delta_1$.

We refer to Ange \cite[Corollaire 1.3]{Ange} and Debaene \cite[Corollary 2]{Debaene} for completely explicit bounds, but with
error terms growing more rapidly with $X$.
Moreover, \cite[(66)]{Landau1912} and \cite[(8.20)]{ChandrasekharanNarasimhan62} obtain bounds of essentially the same strength, but with
the implied constant depending on $K$. Following the latter reference, we could also obtain an analogous result with
the additional condition that $\mathfrak{a}$ represent a fixed element of the ideal class group of $K$.

\medskip

There is a further example where Theorem~\ref{thm:cn} is useful: applied to the {\itshape Sato-Shintani
zeta functions}~\cite{SS} associated to a {\itshape prehomogeneous vector space}. This appeared in the work of the second
and third authors~\cite{TT_rc} on counting cubic fields.
The zeta functions in question count cubic {\itshape rings}, and one can also
define zeta functions counting those rings which are `nonmaximal at $q$'. A version of
Theorem~\ref{thm:cn} (appearing implicitly in~\cite{TT_rc}), in combination with a sieve, led to good error terms in the counting function
for cubic fields.
Moreover, these error terms can be further improved --- for this, see~\cite{BTT}, which will apply
essentially the version of Theorem~\ref{thm:cn} stated here, except accounting for secondary poles of the zeta function at
$s = \frac{5}{6}$.

\medskip
Theorem~\ref{thm:main} also has potential applications itself.
The question came to the third author's attention in the course of his work with Kass~\cite{KT}, counting rational
points of bounded height in the Hilbert scheme of two points in the plane.
Some algebraic geometry reduces this to a lattice point counting problem, for which Theorem~\ref{thm:main} applies.
It turns out that a weaker version of Theorem~\ref{thm:main} is equally effective in \cite{KT}, but similar lattice
point counting problems seem likely to arise in related questions counting points on other vector bundles, and Theorem
\ref{thm:main} may prove useful in that context (among others).

\medskip

{\itshape Organization of the paper.} In Section \ref{sec:cn} we state and then prove our most general `uniform
Landau' result (Theorem \ref{thm:cn-full-improved}). We follow Chandrasekharan and Narasimhan \cite{ChandrasekharanNarasimhan62} quite
closely, albeit with a somewhat different exposition, and while removing factors of $X^{\epsilon}$ in the error terms.

We prove Theorem \ref{sec:cn} in Section \ref{sec:simpler}, as a representative (but still fairly general) special case of Theorem \ref{thm:cn-full-improved}. We then prove Theorem
\ref{thm:ideals} in Section \ref{sec:proof_ideals}; once the relevant facts about Dedekind zeta functions are recalled, 
this is also easily deduced from Theorem \ref{thm:cn-full-improved}. 

Finally, we prove Theorem \ref{thm:main} in Section \ref{sec:proof_main}. We must establish a couple of lemmas concerning the geometry of
lattices and their duals, and then the results are again immediate from  Theorem~\ref{thm:cn-full-improved}.

%**********************************************
%**********************************************
%**********************************************
%\section{Introduction}\label{intro}
%**********************************************
%**********************************************
%**********************************************

\section{A uniform version of Landau's method}\label{sec:cn}

We now prove a uniform version of Landau's method, which provides estimates for sums
of coefficients of a Dirichlet series with functional equation. We will closely follow the version given
in~\cite[Theorem~4.1]{ChandrasekharanNarasimhan62}, but indicating the dependence of our estimates on the Dirichlet series itself.
In order to give a complete statement of the theorem, we must set up some notation.

\subsection{Notation and Statement of Theorem}\label{sec:notation}

\begin{itemize}

\item (The Dirichlet series)
Let $\phi(s)$ and $\psi(s)$ denote two dual Dirichlet series,
\begin{equation*}
  \phi(s) = \sum_{n \geq 1} \frac{a(n)}{\lambda_n^s},
  \quad
  \psi(s) = \sum_{n \geq 1} \frac{b(n)}{\mu_n^s},
\end{equation*}
where $\{\lambda_n\}_{n \in \mathbb{N}}$ and $\{\mu_n\}_{n \in \mathbb{N}}$ are two sequences of strictly increasing
positive real numbers tending to $\infty$.
We assume that $\phi(s)$ and $\psi(s)$ each converge absolutely in
a certain fixed half-plane.

\item (The functional equation and meromorphic continuation)
We assume $\phi$ and $\psi$ satisfy a functional equation of the form
\begin{equation}\label{eq:functional_base}
  \Delta(s) \phi(s) = \Delta(\delta - s) \psi(\delta - s),
\end{equation}
where $\delta > 0$ is some real parameter, and
\begin{equation}\label{eqn:def_delta}
  \Delta(s) := \prod_{\nu = 1}^N \Gamma(\alpha_\nu s + \beta_\nu)
  \qquad
  (\alpha_\nu > 0, \beta_\nu \in \mathbb{C})
\end{equation}
is a product of $N \geq 1$ Gamma factors where the $\alpha_\nu$ are positive.
We require $A := \sum_{\nu = 1}^N \alpha_\nu \geq 1$, and note that $2A$ is frequently called the ``degree of the zeta
function.''

We also assume that this functional equation provides meromorphic continuation
in the following sense:
there exists a meromorphic function $\chi$ such that
$\lim_{\lvert t\rvert \to \infty} \chi(\sigma + it) = 0$ uniformly in every
interval $-\infty < \sigma_1 \leq \sigma \leq \sigma_2 < \infty$, satisfying
\begin{align*}
  \chi(s) &= \Delta(s) \phi(s), \qquad \text{for } \Re(s) > c_1, \\
  \chi(s) &= \Delta(\delta - s) \psi(\delta - s), \qquad \text{for } \Re(s) < c_2,
\end{align*}
where $c_1$ and $c_2$ are some constants.

Our hypotheses force all the poles of $\phi(s)$ to be contained within a fixed
vertical strip, and we assume that $\phi(s)$ has only finitely many poles.
This assumption will be necessary for the series in \eqref{eq:def_rphi} to
converge, and so we exclude (for example) Artin $L$-functions (unless the Artin
conjecture is assumed).

\item (Polar Data)
We define
\begin{equation}\label{eq:def_Szero}
  S_\phi^0(X) := \frac{1}{2\pi i} \int_{C_0} \phi(s)
 X^{s} \frac{ds}{s} = \sum_\xi X^{\xi} R_\xi(\log X),
  \end{equation}
  where $C_0$ is any curve enclosing all the singularities of the integrand.
In the latter sum over the poles $\xi$ of $\frac{\phi(s)}{s}$, $R_\xi(\log X)$
is a constant for each simple pole $\xi$, and is generally a polynomial of
degree $\ord_\xi\big(\frac{\phi(s)}{s} \big) - 1$.

We also define
\begin{equation}\label{eq:def_rphi}
R_\phi(X) := \sum_\xi X^{\Re(\xi)} R_\xi^{\abs}(\log X),
\end{equation}
where $R_\xi^{\abs}$ is the polynomial obtained from $R_\xi$ by taking absolute values of each of the
coefficients.

\item (Partial sums)
We denote the partial sum by
\begin{equation*}
  A_\phi^0(X)
  :=
  \sum_{\lambda_n \leq X} a(n).
\end{equation*}

\item (Bounds on partial sums) We require a bound on the partial sums of the coefficients
of the dual zeta function, which we take to be of the form
  \begin{equation}\label{eq:supposition}
   \sum_{\mu_n \leq Z} \lvert b(n) \rvert \leq B_\psi(Z)
  %  \sum_{\mu_n \leq Z} \lvert b(n) \rvert \leq B_\psi Z^{q'} \log^{r'-1} Z
  \end{equation}
for a function $B_\psi(Z)$ of the form
\begin{equation}\label{eq:CZ_bound}
  B_\psi(Z) = C_\psi Z^r \log^{r'}(C_\psi' Z)
\end{equation}
for some $C_\psi,C_\psi' > 0$, $r' \geq 0$ and
$r >  \frac{\delta}{2} + \frac{1}{4A}$.
(We assume $r >  \frac{\delta}{2} + \frac{1}{4A}$ for technical reasons; see \eqref{eq:W_split}.)
For simplicity, we will require this bound simultaneously for all $Z$ for which
the sum in \eqref{eq:supposition} is nonempty, but see Section \ref{subsec:restrict_partial_sum} for a refined version.
\end{itemize}

With these notations, we prove the following theorem.

\begin{theorem}\label{thm:cn-full-improved}
  With the above, we have
  \begin{equation}\label{eq:thm_main_improved}
    A_{\phi}^0(X) - S_\phi^0(X)
    \ll
    X^{- \frac{1}{2A} - \eta} R_\phi(X)
    +
    \sum_{X \leq \lambda_n \leq X + O(y)} \lvert a(n) \rvert
    + X^{\frac{\delta}{2} - \frac{1}{4A}} z^{- \frac{\delta}{2} - \frac{1}{4A}} B_\psi(z),
  \end{equation}
  for every $\eta \geq - \frac{1}{2A}$, and where
  \begin{equation}\label{eq:def_yz}
  y = X^{1 - \frac{1}{2A} - \eta}, \ \ \
  z = X^{2 A \eta} = \frac{X^{2A - 1}}{y^{2A}}.
  \end{equation}
  Moreover, if $a(n) \geq 0$ for all $n$, then the sum over $|a(n)|$ may be omitted, so that we have simply
  \begin{equation}\label{eq:thm_main_improved_positive}
    A_{\phi}^0(X) - S_\phi^0(X)
    \ll
    X^{- \frac{1}{2A} - \eta} R_\phi(X)
    +
     X^{\frac{\delta}{2} - \frac{1}{4A}} z^{- \frac{\delta}{2} - \frac{1}{4A}} B_\psi(z).
  \end{equation}

   Throughout, and in particular in \eqref{eq:thm_main_improved} and
  \eqref{eq:thm_main_improved_positive},
  the implicit constants depend on: the parameter $\eta$, the
  functional equation (i.e.\ on $\delta$, $N$, $\alpha_v$, and
  $\beta_v$), and on the regions in which $\phi$ and $\psi$ converge absolutely -- but not on other data associated to $\phi$ or $\psi$.
\end{theorem}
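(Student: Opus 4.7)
The plan is to follow the classical approach of Landau as streamlined by Chandrasekharan--Narasimhan~\cite{ChandrasekharanNarasimhan62}, while carefully tracking how each implicit constant depends on the functional equation data ($N$, the $\alpha_\nu$, the $\beta_\nu$, and $\delta$) versus on $\phi$, $\psi$ themselves. The starting point is the Riesz mean
\[
A_\phi^\rho(X) := \frac{1}{\Gamma(\rho + 1)} \sum_{\lambda_n < X} a(n)\,(X - \lambda_n)^\rho,
\]
where $\rho$ is a nonnegative integer chosen large enough (in terms of the functional equation) for all subsequent contour integrals to converge absolutely. The identity
\[
\frac{(X-\lambda)_+^\rho}{\Gamma(\rho+1)} = \frac{1}{2\pi i}\int_{(c)} \frac{\Gamma(s)}{\Gamma(s+\rho+1)}\, \frac{X^{s+\rho}}{\lambda^{s}}\,ds
\]
combined with termwise integration yields the Mellin--Barnes representation
\[
A_\phi^\rho(X) \;=\; \frac{1}{2\pi i}\int_{(c)} \phi(s)\, \frac{\Gamma(s)}{\Gamma(s + \rho + 1)}\, X^{s + \rho}\,ds
\]
for $c$ in the region of absolute convergence of $\phi$.

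The next step is to shift the contour to a line $\Re(s) = c'$ far to the left (past every pole and into the region where the functional equation is invoked). The residues crossed contribute precisely $S_\phi^\rho(X)$, the Riesz-mean analogue of \eqref{eq:def_Szero}; the decay hypothesis on $\chi$ justifies the shift qualitatively while the quantitative bounds depend only on the functional equation. On the shifted line, \eqref{eq:functional_base} turns the integrand into one involving $\Delta(\delta - s)\psi(\delta - s)/\Delta(s)$; substituting the Dirichlet series for $\psi$ and exchanging sum and integral produces
\[
A_\phi^\rho(X) - S_\phi^\rho(X) \;=\; X^\rho \sum_n \frac{b(n)}{\mu_n^\delta}\, F_\rho(\mu_n X),
\]
where $F_\rho(u)$ is a generalized Bessel function defined by the residual Mellin--Barnes integral with kernel $\Delta(\delta - s)\Gamma(s)\big/\big(\Delta(s)\Gamma(s + \rho + 1)\big)$. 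A saddle-point (or repeated integration-by-parts) analysis of this kernel supplies a uniform bound of the form $F_\rho(u)\ll u^{\alpha_\rho}$ whose exponent and implicit constant depend only on the functional equation; securing this uniform bound is the technical heart of the argument.

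To recover $A_\phi^0(X)$ from $A_\phi^\rho(X)$, apply the standard $\rho$-fold finite-difference operator with step of order $y/\rho$. The discrepancy between the difference and the partial sum is controlled by the short-interval sum $\sum_{X \leq \lambda_n \leq X + O(y)} |a(n)|$; when $a(n) \geq 0$ this is absorbed by the monotonicity of $A_\phi^\rho$, producing \eqref{eq:thm_main_improved_positive}. Truncating the series $\sum_n b(n)\mu_n^{-\delta}F_\rho(\mu_n X)$ at $\mu_n = z$, estimating the head via \eqref{eq:supposition} and the tail via the decay of $F_\rho$ (using the hypothesis $r > \delta/2 + 1/(4A)$ to make the tail summable), produces the three error contributions on the right-hand side of \eqref{eq:thm_main_improved}; the balance encoded in \eqref{eq:def_yz} optimizes between the finite-difference step and the truncation point. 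The main obstacles are twofold. First, one must establish the Bessel-type bound on $F_\rho$ with constants depending only on $N,\alpha_\nu,\beta_\nu,\delta$, which forces one to isolate these parameters explicitly throughout the saddle-point analysis rather than letting them disappear into Vinogradov constants. Second, eliminating the $X^\epsilon$ factor from~\cite[Theorem~4.1]{ChandrasekharanNarasimhan62} demands taking $\rho$ to be a fixed integer depending only on the functional equation, and retaining the short-interval sum of $|a(n)|$ as an explicit error term rather than bounding it trivially.
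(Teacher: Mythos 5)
Your outline follows the paper's proof essentially step for step: Riesz means, the Mellin--Barnes representation \eqref{eq:A_base}, contour shift plus functional equation producing $S_\phi^\rho(X)$ and a Bessel-type series over the dual coefficients, $\rho$-fold finite differencing with the short-interval sum of $|a(n)|$ as the discrepancy, truncation at $\mu_n = z$, and monotonicity to dispose of that sum when $a(n)\ge 0$. You also correctly identify that the $X^\epsilon$ of \cite[Theorem~4.1]{ChandrasekharanNarasimhan62} is removed by working with the partial-sum hypothesis \eqref{eq:supposition} (summed over dyadic blocks) in place of the convergence hypothesis \eqref{eq:supposition_CN}.

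There is, however, one step that would fail as written. You record only a single bound $F_\rho(u)\ll u^{\alpha_\rho}$ on the kernel. With the correct exponent $\alpha_\rho = \frac{\delta}{2}-\frac{1}{4A}+\rho(1-\frac{1}{2A})$ this handles the tail $\mu_n > z$, but in the head $\mu_n \le z$ the trivial estimate $\Delta_y^\rho F_\rho(\mu_n X)\ll \max |F_\rho|$ yields, after dividing by $y^\rho$, a contribution of size $X^{\delta/2-1/4A}\,(X^{1-1/2A}/y)^{\rho} = X^{\delta/2-1/4A+\rho\eta}$, which is useless (and grows with $\rho$) precisely in the regime $\eta>0$ needed for every application. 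What makes the finite differencing effective in the head is a second, much stronger estimate on the $\rho$-th \emph{derivative} of the kernel, $I_k^{(k)}(t)\ll t^{\delta/2-1/4A}$ in the paper's notation (Lemma~\ref{lem:I_rho_X_growth}), which via \eqref{eq:finite_diff_iterated} gives $\Delta_y^k I_k(\mu_n X)\ll (\mu_n y)^k(\mu_n X)^{\delta/2-1/4A}$ (Lemma~\ref{lem:I_finite_diffs}); the crossover between the two bounds is exactly what determines $z$. Your sketch needs this second kernel bound stated and proved (again uniformly in the functional-equation data) for the head estimate in \eqref{eq:W_split} to go through. A smaller slip: the hypothesis $r>\frac{\delta}{2}+\frac{1}{4A}$ is what makes the \emph{head} sum dominated by its top dyadic block; it is the choice of $k$ (your $\rho$) sufficiently large that makes the \emph{tail} summable, not the other way around.
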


This is a variation of Theorem~4.1 in~\cite{ChandrasekharanNarasimhan62}, with two
modifications. First of all, we track the dependence of the error terms on growth
estimates for the individual
Dirichlet series $\phi$ and $\psi$.
Secondly, the bound \eqref{eq:supposition} takes the place of
a constant $\beta$ for which
\begin{equation}\label{eq:supposition_CN}
\sum_{n} \lvert b(n) \rvert {\mu_n}^{- \beta} = B'_\psi < \infty,
\end{equation}
avoiding additional factors of $X^{\epsilon}$ appearing in the error terms in \cite{ChandrasekharanNarasimhan62}.
This is not necessarily the only way to do so; indeed, as J. Thorner suggested
to the authors, a plausible alternative approach is to choose $\epsilon =
o_X(1)$ depending explicitly on $X$.

\begin{remark} The bound $\eta \geq - \frac{1}{2A}$ (equivalently, $y \leq X$) is essential;
without it, Landau's finite differencing method doesn't make sense and
counterexamples to the theorem can be constructed.

As is well known, one can at least obtain upper bounds by smoothing; for example, suppose that
the $a(n)$ are nonnegative; then we have
\[
  A_\phi^0(X) \leq  \sum_{\mu_n} a(n) e^{1 - \lambda_n / X}
  = \frac{e}{2 \pi i} \int_{c - i \infty}^{c + i \infty} \phi(s) X^s \Gamma(s) ds.
\]
Now shift the contour to the left of the critical strip, apply the functional equation, and
bound the value of the dual zeta function.
\end{remark}

\subsection{Proof}

We now prove Theorem~\ref{thm:cn-full-improved}.
We defer some proofs of technical lemmas to after the outline to give a better
proof outline.

For each nonnegative integer $k$, we define the smoothed sums
\begin{equation*}
  A_\phi^k(X)
  :=
  \frac{1}{\Gamma(k + 1)} \sum_{\lambda_n \leq X} a(n) (X - \lambda_n)^k.
\end{equation*}
These smoothed sums are sometimes called \emph{Riesz means}.
Typically, it becomes easier to study $A_\phi^k$ for large $k$.
It is possible to recover asymptotics for the non-weighted sum $A_\phi^0(X)$
from asymptotics for $A_\phi^k(X)$ through Landau's ``finite differencing
method.''
Thus the goal is to understand $A_\phi^k(X)$ well.

Recall the notation
\begin{equation*}
  \frac{1}{2\pi i} \int_{(c)} f(s) ds
  :=
  \lim_{T \to \infty} \frac{1}{2\pi i} \int_{-T}^T f(c + it) dt
\end{equation*}
for $c \in \mathbb{R}$.
We recognize $A^k_\phi(X)$ through a classical integral transform (as
in~\cite[\S2]{LowryDudaThesis}, for example) as
\begin{equation}\label{eq:A_base}
  A^k_\phi(X)
  =
  \frac{1}{2\pi i}
  \int_{(c)} \phi(s) \frac{\Gamma(s)}{\Gamma(s + k + 1)} X^{s+k} ds,
\end{equation}
where $c$ is large enough so that the Dirichlet series $\phi(s)$ and $\psi(s)$
converge absolutely for $\Re s \geq c$.
We take $c$ of the form
$c = c(k) = \frac{\delta}{2} + \frac{k}{2A} - \epsilon$
for any $\epsilon$ satisfying $0 < \epsilon < \frac{1}{4A}$,
where the integer $k$ (labeled $\rho$ in \cite{ChandrasekharanNarasimhan62}) is chosen sufficiently large as to guarantee the following properties:
\begin{enumerate}[(i)]
\item
We have $c > -\Re (\beta_\nu /
\alpha_\nu)$ for each $\nu$, guaranteeing that the line $\Re s = c$ is to the
right of all poles of $\Delta(s)/\Delta(\delta - s)$, and that the line $\Re s = \delta - c$
is to the left of all poles of $\phi(s)$.
\item
We have $c > - \Re(\mu / A)$, where $\mu = \frac{1}{2} +
\sum_{\nu = 1}^N (\beta_\nu - \frac{1}{2})$, which we use as a technical
prerequisite to satisfy the conditions of Lemma~\ref{lem:I_rho_X_growth}.
\item\label{it:fractional}
We assume that $\frac{\delta}{2} + \frac{1}{4A} + \frac{k}{2A} > r$ (see \eqref{eq:W_split}), and
that the fractional part of $\frac{k}{2A} - \epsilon - \frac{\delta}{2}$ is in $(0, \frac12)$ (see \eqref{eq:H_shift}).
\item
We assume that $c \neq \delta + n$ for any integer $n$, so that the
integrals \eqref{eq:post_shift} and \eqref{eq:Idef} do not pass through poles.
(In fact, this is implied by \eqref{it:fractional}, since $c-\delta=\frac{k}{2A} - \epsilon - \frac{\delta}{2}$).
\end{enumerate}
As $k$ may be chosen depending only on `the shape of the functional
equation', implied constants in what follows will be allowed to depend on
$k$.

After shifting the line of integration in~\eqref{eq:A_base} to
$\Re s = \delta - c$, replacing $\phi(s)$ with
$\psi(\delta - s) \Delta(\delta - s)/ \Delta(s)$ through the functional
equation~\eqref{eq:functional_base}, and performing the change of variables $s
\mapsto \delta - s$, we rewrite $A_\phi^k(X)$ as
\begin{equation}\label{eq:post_shift}
  A_\phi^k(X)
  =
  S^k_\phi(X)
  +
  \frac{1}{2\pi i} \int_{(c)}
  \frac{\Gamma(\delta - s)}{\Gamma(k + 1 + \delta - s)}
  \frac{\Delta(s)}{\Delta(\delta - s)} \psi(s) X^{\delta + k - s} ds,
\end{equation}
where
\begin{equation}\label{eq:def_Srho}
  S_\phi^k(X) := \frac{1}{2\pi i} \int_{C_k} \phi(s)
  \frac{\Gamma(s)}{\Gamma(s + k + 1)} X^{s + k} ds,
\end{equation}
where $C_k$ is a curve enclosing all the singularities of the integrand between
$\Re(s) = \delta - c$ and $\Res(s) = c$.
(Familiar bounds for the integrand, needed to justify convergence, are recalled in \eqref{eq:gamma_asym}.)

We separate the analytic portion of the shifted integral~\eqref{eq:post_shift}
and define
\begin{equation}\label{eq:Idef}
  I_k(t)
  :=
  \frac{1}{2\pi i}
  \int_{(c)} \frac{\Gamma(\delta - s)}{\Gamma(k + 1 + \delta - s)}
  \frac{\Delta(s)}{\Delta(\delta - s)} t^{\delta + k - s} ds.
\end{equation}
Then we can rewrite~\eqref{eq:post_shift} as
\begin{equation}\label{eq:def_Wp}
  A_\phi^k(X) - S_\phi^k(X)
  =
  W_k(X)
  :=
  \sum_{n \geq 1} \frac{b(n)}{\mu_n^{\delta + k}} I_k(\mu_n X).
\end{equation}
In order to study $W_k(X)$, we will need the following properties of
$I_k(X)$.

\begin{lemma}\label{lem:I_rho_X_growth}
  Suppose that $k$ is large enough that the line $\Re s = c(k)$ is to the
  right of all poles of $\Delta(s)/\Delta(\delta - s)$.
  Let $I_k^{(k)}$ denote the $k$th derivative of $I_k$.
  Then for $t \geq 1$, we have
  \begin{equation*}
    I_k(t) \ll t^{\frac{\delta}{2} - \frac{1}{4A} + k(1 - \frac{1}{2A})},
    \qquad
    I_k^{(k)}(t) \ll t^{\frac{\delta}{2} - \frac{1}{4A}}.
  \end{equation*}
  As $t \to 0$, we have that
  \begin{equation*}
    I_k(t) \ll t^{\frac{\delta}{2} + k(1 - \frac{1}{2A}) + \epsilon},
    \qquad
    I_k^{(k)}(t) \ll t^{\frac{\delta}{2} + \epsilon}.
  \end{equation*}
\end{lemma}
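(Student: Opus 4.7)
The plan is to analyze $I_k(t)$ via Stirling's formula applied to the Gamma factors on vertical contours, supplemented by a stationary-phase argument to extract the precise exponent in the large-$t$ regime. First I would record the pointwise size of the integrand on $\Re(s)=\sigma$. The rational factor $\Gamma(\delta-s)/\Gamma(k+1+\delta-s)=\prod_{j=0}^{k}(\delta-s+j)^{-1}$ decays like $|\Im s|^{-(k+1)}$. In the quotient $\Delta(s)/\Delta(\delta-s)$, Stirling produces for each $\Gamma(\alpha_\nu s+\beta_\nu)$ an amplitude $|\Im s|^{\alpha_\nu\sigma+\Re\beta_\nu-1/2}$ together with an exponential factor $e^{-\pi\alpha_\nu|\Im s|/2}$; the exponentials cancel between numerator and denominator, leaving polynomial amplitude $|\Im s|^{2A\sigma-A\delta}$ times an oscillatory factor. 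So on $\Re(s)=\sigma$ the integrand is asymptotically $|\Im s|^{2A\sigma-A\delta-k-1}\,t^{\delta+k-\sigma}$ in modulus, and the vertical-line integral converges absolutely iff $\sigma<\delta/2+k/(2A)$.

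For the small-$t$ bounds I would integrate on the original contour $\Re(s)=c=\delta/2+k/(2A)-\epsilon$, which lies inside the region of absolute convergence by property~(iii); pulling out $t^{\delta+k-c}=t^{\delta/2+k(1-1/(2A))+\epsilon}$ leaves a convergent $|\tau|^{-1-2A\epsilon}$ integral. For $I_k^{(k)}$ I would use the telescoping identity $\Gamma(\delta-s)/\Gamma(k+1+\delta-s)\cdot \Gamma(\delta+k+1-s)/\Gamma(\delta+1-s)=1/(\delta-s)$ that arises from differentiating $t^{\delta+k-s}$ in $t$ exactly $k$ times, yielding the cleaner formula
\[
  I_k^{(k)}(t) = \frac{1}{2\pi i}\int_{(c')}\frac{1}{\delta-s}\frac{\Delta(s)}{\Delta(\delta-s)}\,t^{\delta-s}\,ds
\]
on any contour $\Re(s)=c'<\delta/2$ (reached from $\Re(s)=c$ by shifting leftward across the poles $s=\delta,\delta+1,\ldots,\delta+k$, which are annihilated by the $k$-fold differentiation). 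The same pointwise analysis then yields the claimed $t^{\delta/2+\epsilon}$ bound as $t\to 0$.

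The hard case is $t\geq 1$, where the naive vertical-line bound only gives $t^{\delta/2+k(1-1/(2A))+\epsilon}$ rather than the target $t^{\delta/2+k(1-1/(2A))-1/(4A)}$. The improvement must come from exploiting oscillation in the integrand: on $\Re(s)=\sigma$ the phase is $2A\,\Im s\cdot\log|\Im s|-\Im s\cdot\log t+O(|\Im s|)$, which is stationary at $|\Im s|\asymp t^{1/(2A)}$. Stationary phase (or equivalently the classical asymptotics of the Meijer $G$-function that $I_k$ essentially is) contributes $|\phi''|^{-1/2}\asymp t^{1/(4A)}$ from a neighbourhood of the stationary point, while the amplitude at that point evaluates to a $\sigma$-independent quantity; combining these pieces gives $I_k(t)\ll t^{\delta/2+k(1-1/(2A))-1/(4A)}$ for $t\geq 1$, and $I_k^{(k)}(t)\ll t^{\delta/2-1/(4A)}$ follows by the same argument applied to the $I_k^{(k)}$ formula above.

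The main obstacle is making this stationary-phase heuristic rigorous, uniformly in the data $(\alpha_\nu,\beta_\nu,\delta,N)$ of the functional equation. In the Chandrasekharan--Narasimhan treatment this is handled more concretely: one partitions the vertical contour into dyadic regions in $|\Im s|$ and extracts the $|\Im s|^{-1/2}$ stationary-phase gain via iterated integration by parts against the logarithmic derivative of $\Delta(s)/\Delta(\delta-s)$. I would follow that route, since the parameter uniformity required for our application is essentially automatic in the integration-by-parts formulation.
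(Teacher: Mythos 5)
Your outline matches the skeleton of our argument: Stirling asymptotics for $\Delta(s)/\Delta(\delta-s)$, the trivial bound on the line $\Re s = c$ for small $t$, the identity reducing $I_k^{(k)}$ to an integral against $\tfrac{1}{\delta-s}\tfrac{\Delta(s)}{\Delta(\delta-s)}t^{\delta-s}$, and the recognition that the extra saving of $t^{-1/(4A)}$ for $t\ge1$ comes from oscillation localized at $|\Im s|\asymp t^{1/(2A)}$. Where you diverge is in how that oscillation is exploited. The paper (following Chandrasekharan--Narasimhan) does \emph{not} run a dyadic integration-by-parts argument: it writes $G(s)=H(s)+H(s)\cdot O(|s|^{-1})$ with $H(s)=\frac{\Gamma(As+\mu)}{\Gamma(\lambda-As)}e^{B+\Theta s}$, evaluates $\int H(s)t^{\delta+k-s}\,ds$ \emph{exactly} as a $J$-Bessel function, and quotes $J_\nu(x)\ll x^{-1/2}$; the error term is handled by shifting $1/(2A)$ to the right, where the extra $|s|^{-1}$ decay gives absolute convergence and the extra $t^{-1/(2A)}$ over-saves. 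Your direct stationary-phase computation gives the right exponent (the phase is $2A\tau\log\tau-\tau\log t+O(\tau)$, $\phi''\asymp 2A/\tau\asymp t^{-1/(2A)}$ at the critical point, and the amplitude times $t^{-\sigma}$ there is $\sigma$-independent), and the van der Corput route you sketch can be made rigorous with uniformity in $(\alpha_\nu,\beta_\nu)$ absorbed into the implied constants; it is a legitimate alternative, though somewhat more work to write down than quoting the Bessel bound, and your attribution of the dyadic/integration-by-parts version to Chandrasekharan--Narasimhan is not accurate.

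There is one concrete gap, in your treatment of $I_k^{(k)}$. You shift from $\Re s=c$ (which for large $k$ lies to the right of $s=\delta$) to a straight line $\Re s=c'<\delta/2$ and assert that the poles of $\tfrac{\Gamma(\delta-s)}{\Gamma(k+1+\delta-s)}=\prod_{j=0}^{k}(\delta+j-s)^{-1}$ at $s=\delta,\dots,\delta+k$ are ``annihilated by the $k$-fold differentiation.'' The residue of $G(s)t^{\delta+k-s}$ at $s=\delta+j$ is a constant times $t^{k-j}$; for $j\ge1$ the $k$th derivative indeed vanishes, but for $j=0$ it equals $-\frac{\Delta(\delta)}{\Delta(0)}$, which is a nonzero constant whenever $\Delta(0)=\prod_\nu\Gamma(\beta_\nu)$ is finite. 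That constant survives in $I_k^{(k)}(t)$ and is incompatible with the claimed bound $I_k^{(k)}(t)\ll t^{\delta/2+\epsilon}$ as $t\to0$ (and with $t^{\delta/2-1/(4A)}$ for $t\ge1$ when $2A\delta<1$). It happens to vanish in every application in the paper because there $\Delta(0)=\infty$, but the lemma is stated for general $\beta_\nu$. The paper avoids the issue entirely by replacing the straight line with the contour $C'$, which detours to the right around all of $s=\delta,\dots,\delta+k$ (and any poles of $\Delta(s)$) so that no residues are ever collected, and only then differentiates under the integral sign --- note also that differentiating $k$ times on the original line $\Re s=c$ is not justified there, since the differentiated integrand fails to converge absolutely for $\Re s>\delta/2$. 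You should adopt the detoured contour (or otherwise account for the residue at $s=\delta$) before the rest of your argument for $I_k^{(k)}$ goes through.
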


\begin{proof}
  Proved in Section~\ref{sec:lemproofs}. 
\end{proof}

We are now ready to describe the finite differencing method, which we apply
to~\eqref{eq:def_Wp}.
Define $\Delta_y F(X) := F(X + y) - F(X)$,
so that the $k$th finite difference operator $\Delta_y^k$ is given by
\begin{equation}\label{eq:def_finite_difference}
  \Delta_y^k F(X)
  =
  \sum_{\nu = 0}^k (-1)^{k - \nu} {k \choose \nu} F(X + \nu y).
\end{equation}
(See \eqref{eq:finite_diff_iterated} for an alternative formula when $F$ is $k$ times differentiable.)

\begin{lemma}\label{lem:finite_diffs_make_dreams_come_true}
  With the same notation as above,
  \begin{equation*}
    \Delta_y^k A_\phi^k(X)
    =
    A_\phi^0(X) y^k
    +
    O \Big( y^k \sum_{X \leq \lambda_n \leq X + k y} \lvert a(n) \rvert\Big).
  \end{equation*}
  Additionally, recalling the definitions of $R_\phi$ and $S_\phi^k(X)$ from
  \eqref{eq:def_rphi} and \eqref{eq:def_Srho} respectively,
 we have for $y \ll X$ that
  \begin{equation}\label{eq:S_diff}
    \Delta_y^k S_\phi^k(X)
    =
    S_\phi^0(X) y^k
    +
    O\Big( \frac{y^{k + 1}}{X} R_\phi(X) \Big).
  \end{equation}
\end{lemma}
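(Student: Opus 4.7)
The proof breaks into two independent parts, one for each identity.

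For the statement concerning $A_\phi^k$, I would interchange the order of the two sums in the definition
\[
  \Delta_y^k A_\phi^k(X) = \frac{1}{k!}\sum_{\nu = 0}^k (-1)^{k - \nu} \binom{k}{\nu} \sum_{\lambda_n \leq X + \nu y} a(n)(X + \nu y - \lambda_n)^k
\]
and classify each $\lambda_n$ into three regimes. When $\lambda_n \leq X$, the index $n$ appears in the inner sum for every $\nu \in \{0, \dots, k\}$, and the classical polynomial identity $\sum_{\nu = 0}^k (-1)^{k - \nu} \binom{k}{\nu}(u + \nu y)^k = k!\, y^k$ (the $k$th finite difference of a monic polynomial of degree $k$) produces a clean contribution of $y^k a(n)$, summing to $y^k A_\phi^0(X)$. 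When $\lambda_n > X + k y$ no term survives. In the intermediate range $X < \lambda_n \leq X + k y$ the crude bound $|X + \nu y - \lambda_n|^k \leq (k y)^k$, combined with $\binom{k}{\nu} \leq 2^k$, shows that the coefficient of $|a(n)|$ is $O(y^k)$, producing the claimed error term.

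For the statement concerning $S_\phi^k$, the cleaner approach is to use the integral representation
\[
  \Delta_y^k F(X) = \int_0^y \cdots \int_0^y F^{(k)}(X + t_1 + \cdots + t_k)\, dt_1 \cdots dt_k
\]
(which the paper has already recorded as \eqref{eq:finite_diff_iterated}), applied to $F = S_\phi^k$. Differentiating $k$ times under the contour integral in \eqref{eq:def_Srho}, the factors $\frac{\Gamma(s)}{\Gamma(s + k + 1)} = \prod_{j = 0}^k (s + j)^{-1}$ and $\frac{d^k}{dX^k} X^{s + k} = (s+1)(s+2) \cdots (s+k) X^s$ cancel telescopically, leaving
\[
  \frac{d^k}{dX^k} S_\phi^k(X) = \frac{1}{2\pi i} \int_{C_k} \phi(s) \frac{X^s}{s}\, ds = S_\phi^0(X),
\]
since the integrand $\phi(s) X^s/s$ has its poles exactly at the poles of $\phi$ and (at worst) at $s = 0$, all of which lie inside $C_k$; the candidate poles at $s = -1, \dots, -k$ coming from $\Gamma(s)/\Gamma(s + k + 1)$ have been cancelled by the derivative.

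To conclude, I would expand $S_\phi^0(X + h)$ for $0 \leq h \leq k y \ll X$. Term-by-term differentiation of $X^\xi R_\xi(\log X)$ yields a quantity of size $O(X^{\Re(\xi) - 1} R_\xi^{\abs}(\log X))$ for each $\xi$; summing over the finitely many poles $\xi$ and using $y \ll X$ gives $|(S_\phi^0)'(u)| \ll R_\phi(X)/X$ uniformly on $[X, X + k y]$. Integrating the resulting mean-value bound $|S_\phi^0(X + h) - S_\phi^0(X)| \ll h R_\phi(X)/X$ over $[0, y]^k$ produces $y^k S_\phi^0(X) + O(y^{k + 1} R_\phi(X)/X)$, as desired. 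Every step is routine; the only moderately delicate point is the telescoping cancellation that identifies $(S_\phi^k)^{(k)}$ with $S_\phi^0$, which is precisely where the kernel $\Gamma(s)/\Gamma(s + k + 1)$ used to define the Riesz means pays off.
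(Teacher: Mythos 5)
Your proposal is correct and follows essentially the same route as the paper: the first identity is obtained by splitting the sum over $\lambda_n$ at $X$ and using the exact evaluation of the $k$th finite difference of $(X-\lambda_n)^k$, and the second by identifying $(S_\phi^k)^{(k)}$ with $S_\phi^0$ and feeding this into the iterated-integral form \eqref{eq:finite_diff_iterated} of $\Delta_y^k$. You supply slightly more detail than the paper (the telescoping of $\Gamma(s)/\Gamma(s+k+1)$ against $\frac{d^k}{dX^k}X^{s+k}$, and the mean-value estimate $|S_\phi^0(X+h)-S_\phi^0(X)|\ll hR_\phi(X)/X$), but the argument is the same.
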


\begin{proof}
  Proved in Section~\ref{sec:lemproofs}.
  %(Note that, if $\phi$ is entire, \eqref{eq:S_diff} asserts that $0 = 0$.)
\end{proof}

We apply $\Delta_y^k$ to \eqref{eq:def_Wp}.
For the left hand side of \eqref{eq:def_Wp}, we see from above that
\begin{equation}\label{eq:AS_finite_diff_setup}
  \Delta_y^k [ A_\phi^k(X) - S_\phi^k(X)]
  =
  y^k [A_\phi^0(X) - S_\phi^0(X)]
+
  O \Big(
    \frac{y^{k + 1}}{X} R_{\phi}(X)
    +
    y^k \sum_{X \leq \lambda_n \leq X + k y} \lvert a(n) \rvert
  \Big).
  \end{equation}
On the other side of~\eqref{eq:def_Wp}, we get
\begin{equation}\label{eq:W_finite_diff_setup}
  \Delta_y^k W_k(X)
  =
  \sum_{n \geq 1} \frac{b(n)}{\mu_n^{\delta + k}} \Delta_y^k I_k(\mu_n X).
\end{equation}
Note that the finite difference is taken of $I_k(\mu_n X)$ as a function of $X$, not of $\mu_n X$.
Using the properties of $I_k(X)$ as stated in Lemma~\ref{lem:I_rho_X_growth},
one can prove the following lemma.

\begin{lemma}\label{lem:I_finite_diffs}
  For $y \ll X$, we have
  \begin{equation}\label{eq:I_finite_diffs}
    \Delta_y^k I_k (\mu_n X) \ll \begin{cases}
      \max_{t \asymp \mu_n X} \lvert I_k (t) \rvert
      \ll
      (\mu_n X)^{\frac{\delta}{2} - \frac{1}{4A} + k(1 - \frac{1}{2A})},
      \\
      (\mu_n y)^k \max_{t \asymp \mu_n X} \lvert I_k^{(k)}(t) \rvert
      \ll
      (\mu_n y)^k (\mu_n X)^{\frac{\delta}{2} - \frac{1}{4A}}.
    \end{cases}
  \end{equation}
\end{lemma}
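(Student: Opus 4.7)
The plan is to verify the two bounds in \eqref{eq:I_finite_diffs} separately, in each case reducing the finite difference $\Delta_y^k I_k(\mu_n X)$ to a pointwise bound on $I_k$ or on $I_k^{(k)}$ over an interval of arguments $t\asymp \mu_n X$, and then invoking Lemma~\ref{lem:I_rho_X_growth}. The reduction to $t\asymp \mu_n X$ is available because the hypothesis $y \ll X$ means each shifted argument $\mu_n(X+\nu y)$ with $0\le \nu\le k$ lies in a dyadic interval around $\mu_n X$ (with constants depending only on $k$, hence only on the functional equation).

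For the first bound, I would use the raw definition \eqref{eq:def_finite_difference}:
\begin{equation*}
\Delta_y^k I_k(\mu_n X) = \sum_{\nu=0}^k (-1)^{k-\nu}\binom{k}{\nu} I_k\bigl(\mu_n(X+\nu y)\bigr),
\end{equation*}
and bound each term in absolute value by $\max_{t\asymp \mu_n X}|I_k(t)|$. Applying the first inequality of Lemma~\ref{lem:I_rho_X_growth} (using $\mu_n X \geq 1$, which we may assume since otherwise the ``$t\to 0$'' case of that lemma gives an even better bound) then yields the stated size $(\mu_n X)^{\frac{\delta}{2} - \frac{1}{4A} + k(1-\frac{1}{2A})}$.

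For the second bound, I would use the iterated-integral representation of the $k$th finite difference that is referenced as \eqref{eq:finite_diff_iterated}: for a $k$-times differentiable function $F$,
\begin{equation*}
\Delta_y^k F(X) = \int_0^y \!\!\cdots\! \int_0^y F^{(k)}(X + t_1 + \cdots + t_k)\, dt_1 \cdots dt_k.
\end{equation*}
Applied to $F(X) = I_k(\mu_n X)$ and the chain rule produces the factor $\mu_n^k$, giving
\begin{equation*}
\Delta_y^k I_k(\mu_n X) = \mu_n^k \int_0^y \!\!\cdots\! \int_0^y I_k^{(k)}\bigl(\mu_n(X+t_1+\cdots+t_k)\bigr)\, dt_1\cdots dt_k.
\end{equation*}
Bounding the integrand by $\max_{t\asymp \mu_n X}|I_k^{(k)}(t)|$ (using $y \ll X$ so that $X+t_1+\cdots+t_k \asymp X$) and multiplying by the volume $y^k$ of the integration domain yields the factor $(\mu_n y)^k \max_{t \asymp \mu_n X}|I_k^{(k)}(t)|$. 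Inserting the second estimate of Lemma~\ref{lem:I_rho_X_growth} completes the bound $(\mu_n y)^k (\mu_n X)^{\frac{\delta}{2} - \frac{1}{4A}}$.

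The only modest subtlety is confirming that $I_k$ is indeed $k$ times differentiable with a derivative admitting the pointwise bound of Lemma~\ref{lem:I_rho_X_growth}; this is immediate from differentiating under the contour integral \eqref{eq:Idef}, which is justified by the rapid decay in $|t|$ of the integrand that is already part of the setup. Other than that, the lemma is a clean consequence of the two standard expressions for the finite difference operator.
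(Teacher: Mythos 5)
Your proposal is correct and matches the paper's own proof essentially step for step: the first bound via the defining sum \eqref{eq:def_finite_difference} with each term majorized by $\max_{t\asymp\mu_n X}|I_k(t)|$, and the second via the iterated-integral form \eqref{eq:finite_diff_iterated} (your parametrization over $[0,y]^k$ is just a change of variables from the paper's nested version), followed by Lemma~\ref{lem:I_rho_X_growth}. If anything, you are slightly more careful than the paper's displayed computation in making the chain-rule factor $\mu_n^k$ explicit.
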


\begin{proof}
  Proved in Section~\ref{sec:lemproofs}.
\end{proof}

The first bound in~\eqref{eq:I_finite_diffs} is superior to the second bound
when $\mu_n \gg z := X^{2A - 1}/y^{2A}$, so that we get the bound
\begin{align}\label{eq:W_split}
%\begin{split}
  \Delta_y^k W_k(X)
  \ll & \
  y^k X^{\frac{\delta}{2} - \frac{1}{4A}}
  \sum_{\mu_n \leq z} |b(n)| \mu_n^{-\frac{\delta}{2} - \frac{1}{4A}}
  +
  X^{\frac{\delta}{2} - \frac{1}{4A} + k(1 - \frac{1}{2A})}
  \sum_{\mu_n > z}
  |b(n)| \mu_n^{- \frac{\delta}{2} - \frac{1}{4A} - \frac{k}{2A}} \\
  \ll & \
   y^k X^{\frac{\delta}{2} - \frac{1}{4A}} z^{- \frac{\delta}{2} - \frac{1}{4A}} B_\psi(z)
   +
   X^{\frac{\delta}{2} - \frac{1}{4A} + k(1 - \frac{1}{2A})}
    z^{- \frac{\delta}{2} - \frac{1}{4A} - \frac{k}{2A}}
    B_\psi(z), \label{eq:Wp_semifinal}
   \end{align}
where in the latter step we deviated
from \cite{ChandrasekharanNarasimhan62}
by dividing the sums into dyadic intervals $[\frac{Z}{2}, Z]$,
bounding the contribution of each by \eqref{eq:supposition}, and using \eqref{eq:CZ_bound}
to sum the results. Our choice of $z$ equalizes the two terms in \eqref{eq:Wp_semifinal}, so that
the second of them may be omitted.

Therefore applying finite difference operators to~\eqref{eq:def_Wp} and
inserting the bounds for the left hand side~\eqref{eq:AS_finite_diff_setup},
and the right hand side~\eqref{eq:Wp_semifinal}, we see that
\begin{equation}\label{eq:combined_bound_I}
  A_\phi^0(X) - S_\phi^0(X)
  \ll
    \frac{y}{X} R_\phi(X)
    +
    \sum_{X \leq \lambda_n \leq X + k y} \lvert a(n) \rvert
  +
  X^{\frac{\delta}{2} - \frac{1}{4A}} z^{- \frac{\delta}{2} - \frac{1}{4A}} B_\psi(z),
\end{equation}
which is \eqref{eq:thm_main_improved}, after the change of variables
$y = X^{1 - \frac{1}{2A} - \eta}$ for some $\eta \geq -\frac{1}{2A}$.

Suppose further now that $a(n) \geq 0$ for all $n$.
Then, as noted in~\cite[eq.\ 4.15]{ChandrasekharanNarasimhan62}, $A_\phi^0(X)$ is monotone in $X$ and we have that
\begin{equation}\label{eq:monotone_inequality}
  y^k A_\phi^0(X)
  \leq
  \Delta_y^k A_\phi^k (X)
  \leq
  y^k A_\phi^0(X + k y).
\end{equation}

This may be proved using~\eqref{eq:finite_diff_iterated} on $A_\phi^k(X)$.
For $i \geq 1$, it's true that $\frac{d}{dX}A_\phi^{i+1}(X)=A_\phi^i(X)$, but
one must check that~\eqref{eq:finite_diff_iterated} is true of $A_\phi^k(X)$
even though $A_\phi^1(X)$ is not differentiable when $X$ is an integer.

Using the inequalities~\eqref{eq:monotone_inequality} with~\eqref{eq:S_diff} gives that
\begin{align*}
  A_\phi^0(X) - S_\phi^0(X)
  \leq
  y^{-k} \Delta_y^k W_k(X)
  +
  O\Big( \frac{y}{X} R_\phi(X) \Big),
\end{align*}
and  estimating $ \Delta_y^k W_k(X)$ as before we obtain
\eqref{eq:thm_main_improved_positive} as an upper bound for $A_\phi^0(X) - S_\phi^0(X)$, and similarly as a lower
bound for $A_\phi^0(X + k y) - S_\phi^0(X)$.
Since $S_\phi^0(X + k y) - S_k^0(X) \ll \frac{y}{X} R_\phi(X)$,
we obtain \eqref{eq:thm_main_improved_positive} as a lower bound for $A_\phi^0(X + k y) - S_\phi^0(X + k y)$,
and correspondingly for $A_\phi^0(X) - S_\phi^0(X)$ after a suitable change of
variables.

This completes the proof of Theorem~\ref{thm:cn-full-improved}.

\subsection{Proofs of Technical Lemmas}\label{sec:lemproofs}

\begin{proof}[Proof of Lemma~\ref{lem:I_rho_X_growth}]

Define
\begin{equation*}
  G(s) := \frac{\Gamma(\delta - s)}{\Gamma(k + 1 + \delta - s)}
  \frac{\Delta(s)}{\Delta(\delta - s)},
\end{equation*}
so that $I_k$ is an inverse Mellin transform of $G(s)$.
We will show that $G(s)$ can be compared to a function $H(s)$, whose inverse
Mellin transform can be explicitly evaluated in terms of $J$-Bessel functions.
As a consequence of Stirling's approximation, one can
show~\cite[2.12]{ChandrasekharanNarasimhan62} that for any $\alpha$,
\begin{equation*}
  \log \Gamma(z + \alpha)
  =
  (z + \alpha - \tfrac{1}{2}) \log z
  - z +
  \tfrac{1}{2} \log 2\pi + O(\lvert z \rvert^{-1})
\end{equation*}
as $\lvert z \rvert \to \infty$, uniformly in regions
$\lvert \arg z \rvert < \pi - \delta$ for any fixed $\delta > 0$.
Using this expression on $G(s)$, one can show that
\begin{equation}\label{eq:gamma_asym}
G(s) \asymp |\Im(s)|^{2A\sigma - A\delta - (k + 1)}
\end{equation}
uniformly on any fixed vertical strip,
and further that
\begin{equation}\label{eq:lem1_1}
  \log G(s)
  -
  \log \Big(
    \frac{\Gamma(As + \mu)}{\Gamma(\lambda - As)}
    e^{\Theta s}
  \Big)
  =
  B + O(\lvert s \rvert^{-1}),
\end{equation}
where
\begin{align*}
  \mu &= \frac{1}{2} + \sum_{\nu = 1}^N (\beta_\nu - \frac{1}{2}) \\
  \lambda &= \mu + A\delta + k + 1, \\
  \Theta &= 2 \Big( \sum_{\nu = 1}^N \alpha_\nu \log \alpha_\nu
  - A \log A \Big), \\
  B &= -\delta \sum_{\nu = 1}^N \alpha_\nu \log \alpha_\nu
  + (A \delta + k + 1) \log A.
\end{align*}
We therefore have
\begin{equation}\label{eq:lem1_Irho}
  I_k(t)
  =
  \frac{1}{2\pi i} \int_{(c)} H(s) t^{\delta + k - s} ds
  +
  \frac{1}{2\pi i} \int_{(c)}
  \big(G(s) - H(s)\big) t^{\delta + k - s} ds,
\end{equation}
where we define $H(s)$ to be
\begin{equation}\label{eq:Hdef}
  H(s) = \frac{\Gamma(As + \mu)}{\Gamma(\lambda - As)} e^{B+\Theta s},
\end{equation}
and we note that it follows from~\eqref{eq:lem1_1} that
\begin{equation*}
  G(s) - H(s) = H(s) \cdot O(\lvert s \rvert^{-1}).
\end{equation*}

Suppose first that $t \geq 1$.
For the second term in~\eqref{eq:lem1_Irho}, we shift the line of integration
to $\Re s = c + \frac{1}{2A}$.
Our assumption \eqref{it:fractional} on $k$ imply that we do not pass through any poles, and the
shifted integral converges absolutely by \eqref{eq:gamma_asym}, so that
\begin{align}\label{eq:H_shift}
  \frac{1}{2\pi i} \int_{(c)} \big(G(s) - H(s)\big) t^{\delta + k - s} ds
  & =  \frac{1}{2\pi i} \int_{(c + \frac{1}{2A})} H(s)
  \cdot O(\lvert s \rvert^{-1}) t^{\delta + k - s} ds \\
&  \ll
  t^{\delta + k - c - (1/2A)} \nonumber \\
 &  \ll
  t^{\frac{\delta}{2} + \frac{2A-1}{2A} \cdot k - \frac{1}{4A}}. \nonumber
\end{align}

For the first term in~\eqref{eq:lem1_Irho}, we recognize it as a $J$-Bessel
function~\cite{WatsonBessel}
\begin{equation}\label{eq:JBessel}
  \frac{1}{2\pi i} \int_{(c)} H(s) t^{\delta + k - s} ds
  =
  A_1 \, (\tilde{t}^{\, 1/2A})^{A\delta + (2A-1)k}
  J_{2\mu +A\delta +k}(2\tilde{t}^{\, 1/2A})
\end{equation}
for a positive constant $A_1$ and where
$\tilde{t} = te^{-\Theta}$ is a linear change of variables.
Using the classical bound $J_\nu(x) \ll x^{-1/2}$ (as
in~\cite[(2.12)]{ChandrasekharanNarasimhan62}
or~\cite{WatsonBessel}), we see that~\eqref{eq:JBessel}, and hence also \eqref{eq:Idef}, is bounded by
\begin{equation*}
  \ll (t^{1/2A})^{A\delta + (2A-1)k} t^{-1/4A}
  =
  t^{\frac{\delta}{2} + \frac{2A-1}{2A} \cdot k - \frac{1}{4A}}.
\end{equation*}

As $t \to 0$, the bound
$I_k(t) \ll t^{\frac{\delta}{2} + \frac{2A-1}{2A}k + \epsilon}$
follows from immediately bounding the integrand in~\eqref{eq:Idef} absolutely.

These prove the two bounds for $I_k(t)$.
We now prove the corresponding bounds for $I_{k}^{(k)}(t)$.
The argument is largely the same as above.
With $c_0 = \frac{\delta}{2} - \epsilon$ (which is $c_k$ when $k = 0$),
define a contour $C'$ as follows:
from $c_0 - i \infty$ up to $c_0 - iR$, right to $c_0 + r - iR$,
up to $c_0 + r + iR$, left to $c_0 + iR$, up to $c_0 + i\infty$.
The parameters $r$ and $R$ are chosen as large as necessary so that passing the contour from the line
$\Re s = c_{k}$ to $C'$ does not cross any poles.

Thus shifting the contour, and differentiating under the integral sign, we have
\begin{equation}\label{eq:Irhorho_1}
  I_k^{(k)}(t)
  =
  \frac{1}{2 \pi i} \int_{C'} h(s) t^{\delta - s} ds
  +
  \frac{1}{2 \pi i} \int_{C'} (g(s) - h(s)) t^{\delta - s} ds,
\end{equation}
where
\begin{equation*}
  g(s) = \frac{\Delta(s)} {(\delta - s) \Delta(\delta - s)},
\end{equation*}
and $h(s)$ is defined as in $H(s)$ (in~\eqref{eq:Hdef}), but with $k = 0$ in
the parameter $\lambda$.
As before
\begin{equation*}
  g(s) - h(s) = h(s) \cdot O(|s|^{-1}).
\end{equation*}

The second integral is bounded analogously to the integral
of $G(s) - H(s)$ above, by shifting to the right, giving for $t \to 0$
\begin{align*}
  \frac{1}{2 \pi i} \int_{C'} (g(s) - h(s)) t^{\delta - s} ds
  &=
  \frac{1}{2 \pi i} \int_{C' + \frac{1}{2A}} h(s)
  \cdot O(\lvert s \rvert^{-1}) t^{\delta - s} ds
  \\
  &\ll t^{\frac{\delta}{2} - \frac{1}{2A} + \epsilon}
  \ll t^{\frac{\delta}{2} - \frac{1}{4A}}.
\end{align*}
The first integral can similarly be explicitly evaluated in terms
of a the $J$-Bessel function.
Elementary manipulations as above show
\begin{equation*}
  \frac{1}{2 \pi i} \int_{C'} h(s) t^{\delta + s} ds
  =
  A_1 \tilde{t}^{\delta/2} J_{2 \mu + A \delta}(2 \tilde{t}^{1/2A})
  \ll t^{\delta/2 - \frac{1}{4A}}.
\end{equation*}
Finally, we have $I_k^{(k)}(t) \ll t^{\frac{\delta}{2} + \epsilon}$ as $t \to 0$ by
trivially bounding~\eqref{eq:Irhorho_1} on the initial line of integration.
This completes the proof.
\end{proof}

\begin{proof}[Proof of Lemma~\ref{lem:finite_diffs_make_dreams_come_true}]

Applying the finite differencing operator $\Delta_y^k$ directly to
$A_\phi^k(X)$ gives that
\begin{align*}
  \Delta_y^{k} A_{\phi}^{k}(X)
  &=
  \sum_{\lambda_n \leq X}
  a(n) \frac{\Delta_y^{k} (X - \lambda_n)^{k}}{\Gamma(k + 1)}
  \\
  &\quad +
  \frac{1}{\Gamma(k + 1)}
  \sum_{\nu = 0}^{k}
  (-1)^{k - \nu} {k \choose \nu}
  \sum_{\lambda_n \in (X, X + \nu y]} a(n) (X + \nu y - \lambda_n)^{k}
  \\
  &=
  A^0_\phi(X) y^{k} +
  O\big(
    y^{k} \sum_{X \leq \lambda_n \leq X + k y} \lvert a(n)\rvert
  \big).
\end{align*}
We have used the explicit evaluation
$\Delta_y^k(X - \lambda_n)^k = y^k \Gamma(k+1)$
to simplify this expression;
for a $k$-times differentiable function $F$, one can use induction on
$k$ to show that
\begin{equation}\label{eq:finite_diff_iterated}
  \Delta_y^k F(x)
  =
  \int_x^{x+y} dt_1 \int_{t_1}^{t_1 + y} dt_2
  \cdots
  \int_{t_{k - 1}}^{t_{k - 1} + y} F^{(k)}(t_k) dt_{k}.
\end{equation}
We also use \eqref{eq:finite_diff_iterated} to prove \eqref{eq:S_diff}:
Since the $k$th derivative of $S_\phi^k(X)$ is exactly $S_\phi^0(X)$ (for any $c$ satisfying the
listed hypotheses), we
then have that
\begin{equation}\label{eq:S_iterated_integral}
  \Delta_y^k S_\phi^k(X)
  =
  \int_X^{X+y} dt_1 \int_{t_1}^{t_1 + y} dt_2
  \cdots
  \int_{t_{k - 1}}^{t_{k - 1} + y} S_\phi^0(t_k) dt_{k}.
\end{equation}
The result then follows by writing $S_\phi^0(t)$ in terms of the residues of $\phi(s)$,
as in \eqref{eq:def_Szero}, and substituting into ~\eqref{eq:S_iterated_integral}.

\end{proof}

\begin{proof}[Proof of Lemma~\ref{lem:I_finite_diffs}]

For $y \ll X$, we have the trivial inequality using only the definition of
the finite differencing operator $\Delta_y^k$,
\begin{equation*}
  \Delta_y^k I_k(\mu_n X)
  \ll
  \max_{t \asymp \mu_n X}
  \lvert I_k(t)\rvert.
\end{equation*}

For the second bound, we use~\eqref{eq:finite_diff_iterated} to see that
\begin{equation*}
  \Delta_y^k I(\mu_n X)
  =
  \int_{X}^{X + y} dt_1 \int_{t_1}^{t_1 + y} dt_2
  \cdots
  \int_{t_{k - 1}}^{t_{k - 1} + y} I^{(k)}_k(\mu_n t_k) d t_k
  \ll
  y^k \max_{t \asymp \mu_n X} I_k^{(k)}(t),
\end{equation*}
where we have trivially bounded the iterated integrals in the last inequality.

In both cases the lemma now follows from the bounds of Lemma~\ref{lem:I_rho_X_growth}.
\end{proof}

\subsection{Restricting the range of the partial sum estimate}\label{subsec:restrict_partial_sum} In \eqref{eq:CZ_bound} we assumed a
bound of the shape 
  \begin{equation}\label{eq:CZ_bound_2}
   \sum_{\mu_n \leq Z} \lvert b(n) \rvert \ll_\psi Z^r \log^{r'}(C_\psi' Z)
\end{equation}
for all $Z$ simultaneously. Here, as a refinement of our main theorem, we argue that this is only required when $Z$ is `approximately' bounded by the parameter $z$ of 
\eqref{eq:def_yz}.

More specifically, suppose for some $C_1 > 0$ that \eqref{eq:CZ_bound} holds simultaneously for all $Z \leq zX^{C_1}$, and
for $Z > z X^{C_1}$ assume only a (very weak) bound of the shape
  \begin{equation}\label{eq:CZ_bound_2}
   \sum_{\mu_n \leq Z} \lvert b(n) \rvert \leq C_\psi Z^{C_2}
\end{equation}
for {\itshape any} constant $C_2$. Then, Theorem \ref{thm:cn-full-improved} still holds, with the implied constant in 
\eqref{eq:thm_main_improved_positive} now depending additionally on $C_1$ and $C_2$. 

The proof is immediate: in \eqref{eq:W_split}, break the sum over $\mu_n > z$ into the ranges $z < \mu_n \leq zX^{C_1}$
and $\mu_n > zX^{C_1}$. The smaller range is estimated as before; for the larger range, choose the parameter $k$ large enough
(depending on $C_1$ and $C_2$) so that the bound \eqref{eq:CZ_bound_2} is enough to guarantee that the contribution is bounded
above by that of the smaller range.

We refer to \cite{BTT} for an application where this additional flexibility is required.

\section{A simpler version: Proof of Theorem \ref{thm:cn}}\label{sec:simpler}
For the reader's convenience, we give the (brief!) explanation of how
Theorem \ref{thm:cn} follows immediately from
Theorem \ref{thm:cn-full-improved}. Other variations can be proved in the same way.

We assumed that $\phi(s)$ has a `well behaved' functional equation. To make this precise, consider the following special case of
the conditions described in Section \ref{sec:notation}:
Assume that $\delta = 1$, so that the functional equation relates $s$ to $1 - s$. We assume that 
each $\alpha_v$ in \eqref{eq:def_Delta} equals $\frac{1}{2}$, so that
$d = N = 2A$ is the usual degree of the zeta function.
 We also assume that
both $\phi$ and $\psi$ are holomorphic away from
simple poles at $s = 1$. If $\psi$ has nonnegative coefficients, then this implies that there exists a positive constant $\widehat{\delta_1}$ for which 
we may take $B_\psi(Z) = \widehat{\delta_1} Z$ in \eqref{eq:supposition}; in any case, we assume that such a $\widehat{\delta_1}$ exists.

By definition, we have
\begin{equation}\label{eq:Rphi_ex}
R_\phi(X) = X \cdot \Res_{s = 1} \phi(s) + \phi(0).
\end{equation}
By the functional equation we have $\phi(0) \ll |\Res_{s = 1} \psi(s)|$, and
\begin{equation}
|\Res_{s = 1} \psi(s)| \leq \limsup_{s \rightarrow 1^+} \ (s - 1)\sum_{\mu_n} |b(n)| \mu_n^{-s}
\leq  \limsup_{s \rightarrow 1^+} \ (s - 1) \sum_{Z} |b(n)| Z^{1-s},
\end{equation}
with the last sum over all dyadic intervals $[Z, 2Z]$ on which the $\mu_n$ are supported. Writing $Z_{\min}$ for the smallest value of $\mu_n$, 
this last quantity is bounded by
\[
\widehat{\delta_1} \limsup_{s \rightarrow 1^+} \ (s - 1) Z_{\min}^{1 - s} \frac{1}{1 - 2^{1 - s}} \ll \widehat{\delta_1}.
\] 
Applying Theorem \ref{thm:cn-full-improved}, we thus obtain
\begin{equation}\label{eq:cn2}
\sum_{\lambda_n < X} a(n) - \Res_{s = 1} \big( \phi(s) \big) X
\ll
\delta_1 X^{1 - \frac{1}{d} - \eta} + \widehat{\delta_1}
X^{\frac{1}{2} - \frac{1}{2d}} \cdot (X^{d \eta})^{\frac{1}{2} - \frac{1}{2d}}.
\end{equation}

We equalize error terms by choosing $\eta$ so that $\delta_1 X^{\frac{1}{2} - \frac{1}{2d}}
= \widehat{\delta_1} X^{\eta} (X^{d \eta})^{\frac{1}{2} - \frac{1}{2d}}$, so that the error
is equal to $O(X^{ \frac{d - 1}{d + 1}} \delta_1^{ \frac{d - 1}{d + 1}} \widehat{\delta_1}^{\frac{2}{d +1 }})$,
as claimed in Theorem \ref{thm:cn}; the condition $\eta \geq - \frac{1}{2A}$ is equivalent to our demand that the error term
be bounded by the main term.

\begin{remark}
We also have the following {\itshape averaged} version of Theorem \ref{thm:cn}.
Suppose that $\big( \phi_i \big)_{i = 1}^n$ is a family of zeta functions, with functional equations
\[
\Delta(s) \phi_i(s) = \Delta(\delta - s) \psi_i(\delta - s)
\]
satisfying all of the hypotheses above for the same function $\Delta$. Then, we have
\begin{equation}\label{eq:cn_averaged}
\sum_{i = 1}^n
\left| \sum_{\lambda_{n, i} < X} a_i(n) - \Res_{s = 1} \big( \phi_i(s)\big) X \right|
\ll 
X^{\frac{d - 1}{d + 1}}
 \left( \sum_{i = 1}^n
\delta_{1, i}\right)^{\frac{d - 1}{d + 1}} 
\cdot 
\left(  \sum_{i = 1}^n \widehat{\delta_{1, i}} \right)^{\frac{2}{d + 1}}
\end{equation}
if the right hand side is bounded by the main term
$\sum_{i = 1}^n \Res_{s = 1} \big( \phi_i(s)\big)X$.
(In the above, the notation $a_i(n)$, $\lambda_{n, i}$, $\delta_{1, i}$, $\widehat{\delta_{1, i}}$ refers
to the quantities $a(n)$, $\lambda_n$, $\delta_i$, and $\widehat{\delta_i}$ associated to each $\phi_i$.)
The proof is immediate: in \eqref{eq:cn2}, choose a single $\eta$ to equalize the cumulative error terms, rather
than choosing an $\eta_i$ for each $\phi_i$. 

Although \eqref{eq:cn_averaged} follows immediately from H\"older's inequality and Theorem \ref{thm:cn}, the above
proof establishes that it is enough to assume that the error term in \eqref{eq:cn_averaged}
is bounded by the main term on average, as opposed to
individually for each $\phi_i$.
\end{remark}

\section{Ideals in number fields: Proof of Theorem \ref{thm:ideals}}\label{sec:proof_ideals}

The proof follows immediately from Theorem  \ref{thm:cn-full-improved} upon recalling the properties
of the associated Dedekind zeta function.
Recall (e.g. from \cite[Chapter 5.10]{IK}) that if $K/\Q$ is a number field of degree $d$,
then its {\itshape Dedekind zeta function}
\begin{equation}
\zeta_K(s) = \sum_{\mathfrak{a} \neq 0} (N \mathfrak{a})^{-s}
\end{equation}
satisfies the functional equation
\begin{equation}\label{eqn:dz_fe}
\Delta(s) \zeta_K(s) = \Delta(1 - s) \widetilde{\zeta_K}(1 - s),
\end{equation}
with
\begin{equation}\label{eq:def_Delta}
\Delta(s) = \Gamma \Big( \frac{s}{2} \Big)^{r_1 + r_2} \Big( \frac{s + 1}{2} \Big)^{r_2},
\end{equation}
where $r_1$ is the number of real embeddings of $K$ and $r_2$ the number of pairs
of complex conjugate embeddings (so that $d = r_1 + 2r_2$), $q := |\Disc(K)|$,
and
\begin{equation}
  \widetilde{\zeta_K}(s) = q^{s - \frac{1}{2}} \pi^{\frac{d}{2} - ds} \zeta_K(s)
  =
  \sum_{\mathfrak{a} \neq 0} q^{-\frac{1}{2}} \pi^{\frac{d}{2}} \Big( N \mathfrak{a} \cdot \frac{\pi^d}{q} \Big)^{-s}.
\end{equation}
The zeta function $\zeta_K(s)$ is entire, away from a simple pole at $s = 1$ with residue
\begin{equation}\label{eqn:dz_residue}
\Res_{s = 1} \zeta_K(s) = \frac{ 2^{r_1} (2 \pi)^{r_2} h R}{ w \sqrt{q} } \ll_d (\log q)^{d - 1}
\end{equation}
where $w$ is the number of roots of unity in $K$,
$h$ is the class number of $K$, $R$ is the regulator of $K$,
and where the upper bound is \cite[Theorem 1]{louboutin}.

We have $\zeta_K(0) \ll q^{1/2} (\log q)^{d - 1}$ by \eqref{eqn:dz_fe} and \eqref{eqn:dz_residue} (and indeed $\zeta_K(0) = 0$ if
$K$ is not imaginary quadratic), and we apply Theorem \ref{thm:cn-full-improved} with
\[
  \delta = 1, \ A = \frac{d}{2}, \ R_\phi(X) = X (\log q)^{d - 1} + O\big(q^{1/2} (\log q)^{d - 1}\big).
\]
We have $\zeta_K(s) \leq \zeta(s)^d = \sum_n d_d(n) n^{-s}$ coefficientwise, and
\begin{equation}\label{eq:dedekind_trivial}
\sum_{n < Z} d_d(n) \ll_d Z (\log Z)^{d - 1},
\end{equation}
so that we may take
\[
B_\psi(Z) = Z q^{1/2} (\log (Zq))^{d - 1}
\]
to conclude that
\begin{multline*}
\# \{ \mathfrak{a} \ : \ N(\mathfrak{a}) < X \}
- X \Res_{s = 1} \zeta_K(s) - O\big(q^{1/2} (\log q)^{d - 1}\big)
\ll \\
X^{1 - \frac{1}{d} - \eta} (\log q)^{d - 1}
+
q^{1/2} X^{\frac{1}{2} - \frac{1}{2d}} \cdot (X^{d \eta})^{\frac{1}{2} - \frac{1}{2d}}
(\log q X^{d \eta})^{d - 1}.
\end{multline*}
We choose $X^{\eta} = X^{\frac{d - 1}{d(d + 1)}} q^{-\frac{1}{d + 1}}$;
formally, this is equivalent to applying
Theorem \ref{thm:cn} with $\delta_1 \ll (\log q)^{d - 1}$ and
$\widehat{\delta_1} \ll q^{1/2} (\log qX)^{d - 1}$. (We may not literally apply
Theorem \ref{thm:cn} as stated because this $\widehat{\delta_1}$ depends on
$X$.) We also note that $\log(qX^{d \eta}) \ll_d \log(X)$ whenever $q \leq X$
(and if $q > X$, our conclusion does not beat the trivial bound \eqref{eq:dedekind_trivial}).

Putting everything together, we have
\[
\# \{ \mathfrak{a} \ : \ N(\mathfrak{a}) < X \} =
\frac{ 2^{r_1} (2 \pi)^{r_2} h R}{ w |\Disc(K)|^{1/2} } X
+
O\Big( |\Disc(K)|^{\frac{1}{d + 1}} X^{\frac{d - 1}{d + 1}} (\log X)^{d - 1} \Big).
\]

\section{Counting lattice points: Proof of Theorem~\ref{thm:main}}\label{sec:proof_main}

\subsection{Background on Epstein zeta functions}

We assemble some background material on Epstein zeta functions which will be
needed in the proof. Epstein's original paper is~\cite{epstein};
our formulation of his results can be found (for example) in ~\cite{Borwein2014}, but to our knowledge the only
reference for the proofs is Epstein's original work. We also refer to~\cite{Cassels}
for a good reference on lattices
and the geometry of numbers.

\medskip

If $\Lambda \subseteq \R^d$ is a rank $d$ lattice, then we choose a matrix $L \in \GL_d(\R)$ for which
$\Lambda = \{ Lx \ : \ x \in \Z^d \}$, and define
$\det \Lambda = \lvert \det L \rvert$.
($L$ is not uniquely defined, but $\det \Lambda$, $\Lambda^*$, and $\zeta(s, \Lambda)$ will be.)

We define the \emph{dual lattice} $\Lambda^*$ to be the set of all vectors $u
\in \mathbb{R}^d$ such that $u^T v \in \mathbb{Z}$ for every $v \in \Lambda$.
It is easy to show that $\Lambda^*$ is actually a lattice of rank $d$, and in
fact it is given by
\begin{equation*}
  \Lambda^* = \{ (L^T)^{-1} x: x \in \mathbb{Z}^d\}.
\end{equation*}
Thus $\Lambda$ is also the dual lattice of $\Lambda^*$, and $\det \Lambda \det
\Lambda^* = 1$.

The function $v \mapsto \lvert v \rvert^2$ is a positive definite quadratic form
on $\Lambda$: if $v = Lx$ where $x\in\Z^d$,
then $\lvert v \rvert^2 = Lx \cdot Lx = x^T (L^T L) x$.
Writing $Q = L^TL$ for the matrix associated to this quadratic form, we have
$\lvert v \rvert^2 = Q[x] := x^T Q x$ and $\det Q = \det (L^T L) = (\det \Lambda)^2$.

Then the \emph{Epstein zeta function} associated to $\Lambda$ (or to $Q$) is
defined by the Dirichlet series
\begin{equation}
  \zeta(s, \Lambda) := \zeta(s, Q) :=
\sum_{v\in\Lambda-\{0\}}|v|^{-2s}=
\sum_{x \in \Z^d - \{ 0 \}} Q[x]^{-s}.
\end{equation}
It converges absolutely for $\Re(s) > \frac{d}{2}$, has analytic continuation to
$\C$ apart from a simple pole at
$s = \frac{d}{2}$ with residue
\begin{equation*}
  \Res_{s = \frac{d}{2}} \zeta(s, \Lambda)
  =
  \frac{1}{\sqrt{\lvert \det Q \rvert}} \frac{\pi^{d/2}}{\Gamma(d/2)}
  =
  \frac{1}{\lvert \det \Lambda \rvert} \frac{\pi^{d/2}}{\Gamma(d/2)},
\end{equation*}
and satisfies the functional equation
\begin{equation}
  \pi^{-s} \Gamma(s) \zeta(s, \Lambda)
  =
  (\det \Lambda)^{-1} \pi^{s - \frac{d}{2}} \Gamma( \tfrac{d}{2} - s )
  \zeta( \tfrac{d}{2} - s, \Lambda^* ).
\end{equation}

\subsection{Conclusion of the proof}

In the introduction, we noted that we can prove Theorem~\ref{thm:main}
by rescaling both $\zeta(s, \Lambda)$ and the output of Theorem~\ref{thm:cn}.
But by using Theorem~\ref{thm:cn-full-improved}, it is possible to avoid any scaling.

We apply Theorem~\ref{thm:cn-full-improved} with $X = R^2$, $\phi(s) = \zeta(s, \Lambda)$,
$\psi(s) = (\det \Lambda)^{-1}  \pi^{\frac{d}{2} -2s} \zeta(s, \Lambda^*)$, $\delta =
\frac{d}{2}, A = 1$. We obtain
\begin{equation}\label{eqn:main2}
  N(\Lambda, R)
  =
  \frac{\pi^{d/2}}{\Gamma(d/2 + 1)} \frac{R^d}{\lvert \det(\Lambda) \rvert}
  +
  O_{d}\bigg(\frac{1}{|\det \Lambda|} R^{d - 1 - 2 \eta}
  + \frac{1}{|\det \Lambda|}    C'_{\zeta(\cdot, \Lambda^*)}
 R^{(1 + 2\eta)\big(\frac{d}{2} - \frac{1}{2}\big)} \bigg),
 \end{equation}
where $C'_{\zeta(\cdot, \Lambda^*)}$ is a positive constant guaranteeing the
bound
\begin{equation}\label{eq:dual_point_count}
  \sum_{\substack{v \in \Lambda^* \\ 0 <  |v|^2 \leq Z}} 1 \leq
  C'_{\zeta(\cdot, \Lambda^*)} Z^{d/2},
\end{equation}
and upon choosing $R^{2 \eta} = R^{\frac{d - 1}{d + 1}} (C'_{\zeta(\cdot, \Lambda^*)})^{- \frac{2}{d + 1}}$
we obtain
\begin{equation}\label{eqn:main3}
  N(\Lambda, R)
  =
  \frac{\pi^{d/2}}{\Gamma(d/2 + 1)} \frac{R^d}{\lvert \det(\Lambda) \rvert}
  +
  O_{d}\bigg(
      \frac{1}{\lvert \det \Lambda \rvert}
    (C'_{\zeta(\cdot, \Lambda^*)})^{\frac{2}{d+1}}
    \; R^{d \cdot \frac{d-1}{d+1}}
  \bigg),
\end{equation}

We will see that the condition $\eta \geq - \frac{1}{2A}$ holds whenever $R > r_{\bas}$; 
it remains to bound $C'_{\zeta(\cdot, \Lambda^*)}$, which we
do in the next lemma.

\begin{lemma}\label{lem:point_count}
  For any complete lattice $\Lambda \subseteq \R^d$, let $\lambda_1(\Lambda)$ denote the length of the shortest nontrivial vector in
  $\Lambda$.
  The number of lattice points in $\Lambda$ satisfying $|v| \leq X$ is
  bounded by
  \begin{equation}\label{eq:lattice_1}
    \sum_{\substack{v \in \Lambda \\ 0 < |v| \leq X}} 1
    \ll_d
    \frac{X^d}{\lambda_1(\Lambda)^d}.
  \end{equation}
  Therefore, in the notation above, $C'_{\zeta(\cdot, \Lambda)}$ can be taken as
  \begin{equation*}
    C'_{\zeta(\cdot, \Lambda)} = \frac{c_d}{\lambda_1(\Lambda)^d}
  \end{equation*}
  for some absolute constant $c_d$ depending only on the dimension $d$.
\end{lemma}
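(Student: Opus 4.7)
The plan is to establish \eqref{eq:lattice_1} by a standard ball-packing argument, and then immediately deduce the value of $C'_{\zeta(\cdot, \Lambda)}$ by substituting $X = \sqrt{Z}$ into the resulting inequality.

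For the main bound, the idea is to exploit the fact that no two distinct points of $\Lambda$ lie within distance $\lambda_1 := \lambda_1(\Lambda)$ of one another. Concretely, around each lattice point $v$ with $0 < |v| \leq X$, I would place the open Euclidean ball $B(v, \lambda_1/2)$. Any two such balls are disjoint, since for distinct lattice points $u \neq v$ we have $|u - v| \geq \lambda_1$, which exceeds the sum of the radii. On the other hand, every such ball is contained in the larger ball $B(0, X + \lambda_1/2)$. Comparing Lebesgue measures, and writing $V_d$ for the volume of the unit ball in $\R^d$, yields
\[
N(\Lambda, X) \cdot V_d \left(\tfrac{\lambda_1}{2}\right)^d \;\leq\; V_d \left(X + \tfrac{\lambda_1}{2}\right)^d,
\]
so that $N(\Lambda, X) \leq (1 + 2X/\lambda_1)^d$.

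To turn this into the desired bound $\ll_d (X/\lambda_1)^d$, I would split into two cases. If $X \geq \lambda_1/2$, then $1 + 2X/\lambda_1 \leq 4X/\lambda_1$, giving $N(\Lambda, X) \leq 4^d (X/\lambda_1)^d$. If instead $X < \lambda_1/2 < \lambda_1$, then by definition of $\lambda_1$ there are no nonzero lattice points with $|v| \leq X$, so the sum in \eqref{eq:lattice_1} is simply zero and the bound holds trivially. In either case we obtain \eqref{eq:lattice_1} with an implicit constant depending only on $d$.

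For the second assertion, applying \eqref{eq:lattice_1} with $X = \sqrt{Z}$ to the dual sum over $|v|^2 \leq Z$ gives
\[
\sum_{\substack{v \in \Lambda \\ 0 < |v|^2 \leq Z}} 1 \;\ll_d\; \frac{Z^{d/2}}{\lambda_1(\Lambda)^d},
\]
so one may take $C'_{\zeta(\cdot, \Lambda)} = c_d / \lambda_1(\Lambda)^d$ for an absolute constant $c_d$ depending only on $d$. There is no real obstacle in this argument; the only point requiring a small amount of care is handling the regime $X < \lambda_1/2$ separately, since the volume comparison only becomes useful once $X$ is at least comparable to $\lambda_1$.
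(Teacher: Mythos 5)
Your proof is correct and rests on the same key idea as the paper's: pack disjoint open balls of radius $\lambda_1(\Lambda)/2$ around the lattice points and compare volumes, handling the degenerate range $X < \lambda_1(\Lambda)$ separately (where the sum is empty). The only cosmetic difference is that you make a single global volume comparison inside $B(0, X + \lambda_1/2)$, whereas the paper first decomposes into annuli of width $\lambda_1(\Lambda)$ and bounds each annulus by $\ll_d j^{d-1}$ before summing; both yield the bound $\ll_d X^d/\lambda_1(\Lambda)^d$ with a constant depending only on $d$.
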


\begin{proof}
Assume $X \leq \lambda_1(\Lambda)$ (otherwise, the bound is trivial), and
define $R_j := \{ x \in \mathbb{R}^d : j\lambda_1(\Lambda) \leq |x| < (j+1)\lambda_1(\Lambda)\}$
to be a set of $d$-dimensional annuli, so that
\begin{equation*}
  \sum_{\substack{v \in \Lambda \\ |v| \leq X}} 1
  =
  \sum_{j \geq 1} \;
  \sum_{\substack{v \in \Lambda \cap R_j \\  |v| \leq X}} 1  \leq
    \sum_{j \leq \lfloor X/\lambda_1(\Lambda)\rfloor} \# \{ v \in \Lambda \cap R_j \} .
\end{equation*}
To bound $  \# \{ v \in \Lambda \cap R_j \}$, consider $n$-spheres of radius
$\frac{\lambda_1(\Lambda)}{2}$ around each $v$ being counted: their interiors
are disjoint and lie within the annulus
$\{ \frac{|x|}{\lambda_1(\Lambda)} \in [j - \frac12, j + \frac32] \}$,
so that
\begin{equation*}
  \# \{ v \in \Lambda \cap R_j \}
  \ll_d
  \Big( j + \frac32 \Big)^d -
  \Big( j - \frac12 \Big)^d \ll_d j^{d - 1},
\end{equation*}
yielding the bound
\begin{equation}
  \sum_{\substack{v \in \Lambda \\  |v|  \leq X}} 1
  \ll_d
  \sum_{j \leq \lfloor X/\lambda_1(\Lambda)\rfloor} j^{d-1}
  \ll_d
  \frac{X^d}{\lambda_1(\Lambda)^d},
\end{equation}
where the implicit constants depend only on the dimension $d$,
and not on $\Lambda$.
\end{proof}

\begin{remark} Observe that, owing to the shape of the functional
equation of the Epstein zeta function, the proof of Theorem \ref{thm:main} requires as input 
a simpler but similar statement.

One can improve the bound in Theorem \ref{thm:main}, at the expense of complicating its statement,
by incorporating a stronger bound than Lemma \ref{lem:point_count}. For example, by Widmer's
bound \cite[Theorem 5.4]{widmer}, we have
  \begin{equation}\label{eq:lattice_2}
    \sum_{\substack{v \in \Lambda \\ |v| \leq X}} 1
    \ll_d 1 + 
    \max_{1 \leq k \leq d} \frac{X^k}{\lambda_1(\Lambda) \cdots \lambda_k(\Lambda)}.  
    \end{equation}

 \end{remark}

\begin{lemma}\label{lem:dual_lattice}
  Suppose $\Lambda$ is any rank $d$ lattice in $\mathbb{R}^d$, and let
  $\Lambda^*$ denote its dual lattice.
  Let $r_{\bas}(\Lambda^*)$ denote the infimum of all $r \in \mathbb{R}^+$ such that the
  ball $B(r)$ contains a basis for $\Lambda^*$.
  Then
  \begin{equation*}
    \lambda_1(\Lambda) \cdot r_{\bas}(\Lambda^*) \geq 1.
  \end{equation*}
\end{lemma}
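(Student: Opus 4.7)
The plan is to exploit the defining duality between $\Lambda$ and $\Lambda^*$: for any $v \in \Lambda$ and $u \in \Lambda^*$, the inner product $v^T u$ is an integer.

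First I would fix any $r > r_{\bas}(\Lambda^*)$, so by definition there exists a $\Z$-basis $u_1, \ldots, u_d$ of $\Lambda^*$ with $|u_i| < r$ for each $i$. Let $v \in \Lambda$ be a shortest nonzero vector, so $|v| = \lambda_1(\Lambda)$. Since the $u_i$ form a basis of $\Lambda^*$, they are in particular $\R$-linearly independent and hence span $\R^d$; therefore $v$ cannot be orthogonal to all of them (otherwise $v = 0$), and there is some index $i$ with $v^T u_i \neq 0$. By the definition of the dual lattice, $v^T u_i \in \Z$, so $|v^T u_i| \geq 1$.

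Next I would apply the Cauchy--Schwarz inequality:
\[
1 \leq |v^T u_i| \leq |v| \cdot |u_i| < \lambda_1(\Lambda) \cdot r.
\]
Since this holds for every $r > r_{\bas}(\Lambda^*)$, I take the infimum over such $r$ to conclude $\lambda_1(\Lambda) \cdot r_{\bas}(\Lambda^*) \geq 1$, as desired.

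There is no real obstacle here; the argument is essentially a one-line application of integrality of $v^T u$ combined with Cauchy--Schwarz. The only small subtlety worth writing out carefully is the non-orthogonality step (some $v^T u_i$ must be nonzero), which uses that a basis of $\Lambda^*$ is in particular an $\R$-basis of $\R^d$.
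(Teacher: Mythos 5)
Your proof is correct and follows essentially the same argument as the paper's: pick a basis of $\Lambda^*$ inside a ball of radius slightly exceeding $r_{\bas}(\Lambda^*)$, note that some basis vector has nonzero (hence integral, hence $\geq 1$ in absolute value) inner product with a shortest vector of $\Lambda$, and conclude by Cauchy--Schwarz. Your handling of the infimum (working with $r > r_{\bas}(\Lambda^*)$ and then passing to the limit) is in fact slightly more careful than the paper's write-up.
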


\begin{proof}
  Recall the definition of the dual lattice,
  $\Lambda^* :=
  \{ w \in \mathbb{R}^d
    :
    \forall v \in \Lambda, \langle v, w \rangle \in \mathbb{Z}
  \}$.
  Let $v \in \Lambda$ be of minimal length, so that $\| v \| =
  \lambda_1(\Lambda)$.
  Suppose $w_1, \ldots, w_d$ is a set of $n$ linearly independent elements in
  $\Lambda^*$ fitting within $\overline{B}(V_{\Lambda^*})$.
  Then there exists $i$ such that $\langle w_i, v \rangle \neq 0$.
  Then by the definition of $\Lambda^*$ above, we have $\langle w_i, v \rangle
  \in \mathbb{Z}$, and thus $ | w_i | | v | \geq 1$.
\end{proof}

By Lemmas~\ref{lem:point_count} and~\ref{lem:dual_lattice}, we can take
\begin{equation}\label{eq:foo}
  C'_{\zeta(\cdot, \Lambda^*)}
  =
  c_d/\lambda_1(\Lambda^*)^d
  \leq
  c_d \; r_{\bas}(\Lambda)^d.
\end{equation}
The condition $\eta \geq - \frac{1}{2A}$ is equivalent to $R^{\frac{d - 1}{d + 1}} \big(C'_{\zeta(\cdot, \Lambda^*)}\big)^{- \frac{2}{d + 1}}
\geq R^{-1}$, which by \eqref{eq:foo} is true if $r_{\bas}(\Lambda) \ll_d R$. We may then allow $r_{\bas} < R$ by multiplying $R^{\eta}$ by a
factor that is $O_d(1)$, which multiplies the error term in \eqref{eqn:main3} by another (harmless) factor of $O_d(1)$.

Our result therefore follows by inserting~\eqref{eq:foo} into~\eqref{eqn:main3}.

\section*{Acknowledgments}

We would like to thank Bruce Berndt, Eric Gaudron, Aleksandar Ivi\'c, Jesse Kass,
Martin Nowak, Anders S\"odergren, Jesse Thorner, and Martin Widmer for their advice, suggestions, and encouragement.

We began this paper at the Mathematical Sciences Research Insitute in Berkeley, CA in Spring 2017,
and we would like to thank MSRI for providing an excellent atmosphere in which to work, as well as the National Science Foundation (under Grant No. DMS-1440140) for their
financial support of MSRI.

The first author was partially supported by the National Science Foundation Graduate Research
Fellowship Program under Grant No.\ DGE 0228243 and the EPSRC Programme Grant
EP/K034383/1 LMF: L-Functions and Modular Forms;
the second author was partially supported by the JSPS, KAKENHI Grant Numbers
JP24654005, JP25707002, and JP17H02835;
the third author was partially supported by the National Security Agency under
Grant No. H98230-16-1-0051.

\bibliographystyle{alpha}
\bibliography{lattice-points}

 \end{document}